\let\originalleft\left
\let\originalright\right
\renewcommand{\left}{\mathopen{}\mathclose\bgroup\originalleft}
\renewcommand{\right}{\aftergroup\egroup\originalright}
\newcommand{\doroverline}[2]{\overline{#1#2}}
\newcommand{\roverline}[1]{\mathpalette\doroverline{#1}}
\newcommand{\lineSeg}[2]{\roverline{#1 #2}}
\begin{document}

\def\cR{\mathcal{R}}
\def\ee{\varepsilon}
\def\rD{{\rm D}}
\def\rT{{\rm T}}

\newtheorem{theorem}{Theorem}[section]
\newtheorem{lemma}[theorem]{Lemma}
\newtheorem{proposition}[theorem]{Proposition}

\theoremstyle{definition}
\newtheorem{definition}{Definition}[section]



\title{
Constructing robust chaos: invariant manifolds and expanding cones.
}
\author{
P.A.~Glendinning$^{\dagger}$ and D.J.W.~Simpson$^{\ddagger}$\\
\small $^{\dagger}$School of Mathematics, University of Manchester, Oxford Road, Manchester, M13 9PL, UK.\\
\small $^{\ddagger}$Institute of Fundamental Sciences, Massey University, Palmerston North, New Zealand.
}
\maketitle

{\em Keywords:}
piecewise-linear; piecewise-smooth; border-collision bifurcation; Lyapunov exponent; robust chaos

{\em MSC codes:}
37G35; 
39A28 

\begin{abstract}

Chaotic attractors in the two-dimensional border-collision normal form (a piecewise-linear map) can persist throughout open regions of parameter space. Such {\em robust chaos} has been established rigorously in some parameter regimes. Here we provide formal results for robust chaos in the original parameter regime of [S.~Banerjee, J.A.~Yorke, C.~Grebogi, Robust Chaos, {\em Phys. Rev. Lett.}~80(14):3049--3052, 1998]. We first construct a trapping region in phase space to prove the existence of a topological attractor. We then construct an invariant expanding cone in tangent space to prove that tangent vectors expand and so no invariant set can have only negative Lyapunov exponents. Under additional assumptions we also characterise an attractor as the closure of the unstable manifold of a fixed point.

\end{abstract}

\section{Introduction}
\label{sec:intro}
\setcounter{equation}{0}

A fundamental difference between smooth and piecewise-smooth dynamical systems is the possibility of robust chaos.
This refers to the existence of a chaotic attractor throughout open regions of parameter space.
This cannot happen, for instance, in typical families of smooth one-dimensional maps
because in this case periodic windows are typically dense in parameter space \cite{Va10}.
Robust chaos is highly desirable in applications that use chaos.
In chaos-based cryptography \cite{KoLi11}, for example,
robust chaos is preferred because periodic windows in `key space'
can be usurped by a hacker to decipher the encryption \cite{AlMo03}.

One of the most widely studied families of piecewise-smooth maps
is the two-dimensional border-collision normal form
\begin{equation}
\begin{bmatrix} x \\ y \end{bmatrix} \mapsto
f(x,y) = \begin{cases}
\begin{bmatrix} \tau_L & 1 \\ -\delta_L & 0 \end{bmatrix}
\begin{bmatrix} x \\ y \end{bmatrix} +
\begin{bmatrix} 1 \\ 0 \end{bmatrix}, & x \le 0, \\
\begin{bmatrix} \tau_R & 1 \\ -\delta_R & 0 \end{bmatrix}
\begin{bmatrix} x \\ y \end{bmatrix} +
\begin{bmatrix} 1 \\ 0 \end{bmatrix}, & x \ge 0,
\end{cases}
\label{eq:f}
\end{equation}
where $\tau_L,\delta_L,\tau_R,\delta_R \in \mathbb{R}$ are parameters.
This was introduced in \cite{NuYo92},
except in \eqref{eq:f} the constant term is $[1,0]^{\sf T}$
instead of $[\mu,0]^{\sf T}$, where $\mu \in \mathbb{R}$.
Via a linear rescaling, $\mu \ne 0$ can be transformed to $\mu = \pm 1$,
and the choice $\mu = 1$ can be made by interchanging the roles of $x<0$ and $x>0$.
The border-collision normal form arises by transforming and truncating a piecewise-smooth map
that has a border-collision bifurcation at $\mu = 0$ \cite{Si16}. 
Many groups have described non-chaotic dynamics of \eqref{eq:f} in detail,
see for instance \cite{BaGr99,Si14d,SiMe08b,SuGa08,ZhMo06b}. 

In a highly influential paper,
Banerjee, Yorke, and Grebogi \cite{BaYo98} considered \eqref{eq:f} in a certain parameter regime $\cR$
where $f$ is orientation-preserving (i.e.~$\delta_L > 0$ and $\delta_R > 0$).
Based on the intersections of the stable and unstable manifolds of two fixed points,
they argued heuristically that $f$ has a unique chaotic attractor.
Their arguments apply throughout $\cR$, so suggest robust chaos.
Although their arguments are incomplete, their conclusions have been well supported by numerical investigations.

In this paper we prove for the first time that $f$ has an attractor that is chaotic, in a certain sense, throughout $\cR$.
We also characterise the attractor, but subject to additional restrictions on the parameter values.
The arguments in \cite{BaYo98} concern the stable and unstable manifolds of the fixed points,
so are insufficient to describe all orbits of $f$.
To remedy this we employ methods used by Misiurewicz \cite{Mi80} for the Lozi map
(given by \eqref{eq:f} with $\tau_L = -\tau_R$ and $\delta_L = \delta_R$),
and Benedicks and Carleson \cite{BeCa91} for smooth maps.

For the Lozi map, Misiurewicz \cite{Mi80} considered an orientation-reversing parameter regime
and proved the existence of a topological attractor on which $f$ is transitive.
This shows that the Lozi map exhibits robust chaos.
Collet and Levy \cite{CoLe84} subsequently showed that this attractor supports an SRB measure
(and so has many nice ergodic properties \cite{HuKe02}).

For parameter values where $f$ is non-invertible (i.e.~$\delta_L \delta_R \le 0$),
Glendinning \cite{Gl16e} identified parameter regimes
where $f$ has a (necessarily chaotic) two-dimensional attractor
by using general results on piecewise-expanding maps. 
Also, Kowalczyk \cite{Ko05} studied chaos in the case $\delta_R = 0$
for which one-dimensional techniques suffice.

Returning to the orientation-preserving case,
Cao and Liu \cite{CaLi98} used one-dimensional techniques to extend
Misiurewicz's results to arbitrarily small $\delta_L = \delta_R > 0$.
Glendinning \cite{Gl17} used Young's theorem \cite{Yo85}
to prove that in certain subsets of $\cR$ there exists an attractor with an SRB measure.

The remainder of this paper is organised as follows.
We first define $\cR$ and state our main results in \S\ref{sec:mainResults}.
In \S\ref{sec:trappingRegion} we identify a trapping region, $\Omega_{\rm trap}$,
that necessarily contains a topological attractor.
Then in \S\ref{sec:iec} we study the evolution of tangent vectors
and identify a cone in tangent space that is forward invariant and expanding under $\rD f$.
On the invariant expanding cone, tangent vectors expand under every iteration of $f$.
Thus if an attractor has well-defined Lyapunov exponents,
one of these exponents must be positive, \S\ref{sec:Lyapunov}.

In subsequent sections we seek to make more precise statements,
and to this end assume that both fixed points
have an eigenvalue with absolute value greater than $\sqrt{2}$.
In \S\ref{sec:invMans} we analyse the closure of the unstable manifold of one fixed point,
and in \S\ref{sec:transitivity} we show that on this set $f$ is transitive.
Finally, \S\ref{sec:conc} provides a discussion and outlook for future studies.

\section{Preliminaries and main results}
\label{sec:mainResults}
\setcounter{equation}{0}

\subsubsection*{The fixed points and their invariant manifolds}

Let
\begin{align}
A_L &= \begin{bmatrix} \tau_L & 1 \\ -\delta_L & 0 \end{bmatrix}, &
A_R &= \begin{bmatrix} \tau_R & 1 \\ -\delta_R & 0 \end{bmatrix},
\label{eq:ALAR}
\end{align}
denote the matrices in \eqref{eq:f}.
As in \cite{BaYo98}, throughout this paper we assume
\begin{equation}
\begin{aligned}
\delta_L &> 0, &\hspace{35mm} \delta_R &> 0, \\
\tau_L &> \delta_L + 1, & \tau_R &< -(\delta_R + 1).
\end{aligned}
\label{eq:paramCond1}
\end{equation}
This is equivalent to assuming that 
$A_L$ has eigenvalues $0 < \lambda_L^s < 1 < \lambda_L^u$
and $A_R$ has eigenvalues $\lambda_R^u < -1 < \lambda_R^s < 0$.
Then $f$ has two fixed points:
\begin{align}
Y &= (Y_1,Y_2) = \left( \frac{-1}{\tau_L - \delta_L - 1}, \frac{\delta_L}{\tau_L - \delta_L - 1} \right),
\label{eq:Y} \\
X &= (X_1,X_2) = \left( \frac{1}{\delta_R + 1 - \tau_R}, \frac{-\delta_R}{\delta_R + 1 - \tau_R} \right),
\label{eq:X}
\end{align}
where $Y_1 < 0$ and $X_1 > 0$.
These are saddle-type fixed points because
the eigenvalues associated with $Y$ and $X$ are simply those of $A_L$ and $A_R$, respectively.

As with smooth maps, the stable and unstable subspaces of $Y$ and $X$
are lines intersecting $Y$ and $X$ and with slopes matching those of the eigenvectors of $A_L$ and $A_R$.
Since $f$ is piecewise-linear, the stable and unstable manifolds of $Y$ and $X$
initially coincide with their corresponding subspaces as they emanate from $Y$ and $X$.
Globally, the stable and unstable manifolds
have a complicated piecewise-linear structure due to the piecewise-linear nature of $f$.

To understand this structure,
observe that $f$ is continuous but non-differentiable on $x=0$, the {\em switching manifold}.
The image of the switching manifold is $y=0$.
Thus if $\alpha \subseteq \mathbb{R}^2$ is a line segment that intersects $x=0$ transversally,
then $f(\alpha)$ is the union of two line segments that meet at a point on $y=0$.
Thus the unstable manifolds have `kinks' at points on $y=0$,
and on the forward orbits of these points.
Similarly the stable manifolds have kinks at points on $x=0$,
and on the backward orbits of these points.

Since the eigenvalues associated with $Y$ are positive,
the stable and unstable manifolds of $Y$,
$W^s(Y)$ and $W^u(Y)$, each have two dynamically independent branches.
In the direction of decreasing $x$ they simply coincide with
the stable and unstable subspaces of $Y$: $E^s(Y)$ and $E^u(Y)$.
In the direction of increasing $x$, let $D = (D_1,0)$ and $S = (0,S_2)$ denote the first kinks of
$W^u(Y)$ and $W^s(Y)$ as we follow these manifolds outwards from $Y$, see Fig.~\ref{fig:qqInvManifolds_a}.
By using the fact that the line segments $\lineSeg{Y}{D}$ and $\lineSeg{Y}{S}$
are contained within $E^u(Y)$ and $E^s(Y)$,
it is a simple exercise to obtain
\begin{align}
D_1 &= \frac{1}{1 - \lambda_L^s}, \label{eq:D1} \\
S_2 &= \frac{-\lambda_L^u}{\lambda_L^u - 1}. \label{eq:S2}
\end{align}
Notice $D_1 > 1$ and $S_2 < -1$.

\begin{figure}[b!]
\begin{center}
\setlength{\unitlength}{1cm}
\begin{picture}(15,7.5)
\put(0,0){\includegraphics[height=7.5cm]{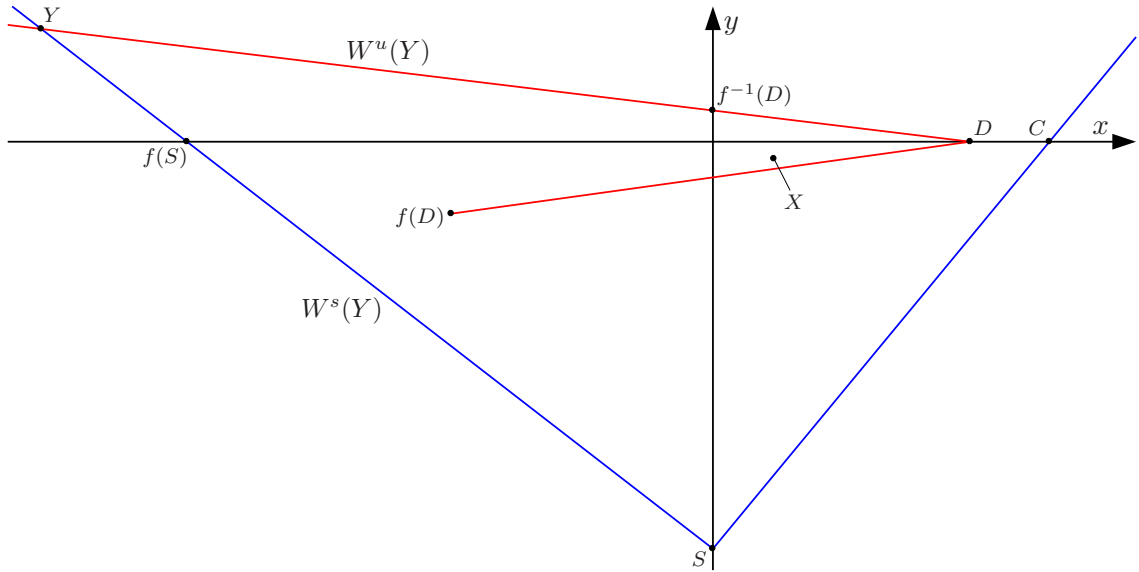}}
\put(14.42,5.8){\small $x$}
\put(9.52,7.2){\small $y$}
\put(13.57,5.79){\scriptsize $C$}
\put(9.09,.06){\scriptsize $S$}
\put(1.79,5.42){\scriptsize $f(S)$}
\put(.47,7.3){\scriptsize $Y$}
\put(9.42,6.24){\scriptsize $f^{-1}(D)$}
\put(12.84,5.79){\scriptsize $D$}
\put(5.15,4.62){\scriptsize $f(D)$}
\put(10.32,4.81){\scriptsize $X$}
\put(3.9,3.37){\footnotesize $W^s(Y)$}
\put(4.5,6.8){\footnotesize $W^u(Y)$}
\end{picture}
\caption{
Initial portions of the stable and unstable manifolds of the fixed point $Y$.
\label{fig:qqInvManifolds_a}
}
\end{center}
\end{figure}

\subsubsection*{The parameter regime $\cR$}

As we continue to follow the stable manifold $W^s(Y)$ outwards from $Y$,
the manifold has its second kink at $f^{-1}(S)$.
Due to the constraints \eqref{eq:paramCond1},
the point $f^{-1}(S)$ lies in the first quadrant $x,y>0$.
Let $C = (C_1,0)$ denote the intersection of $\lineSeg{S}{f^{-1}(S)}$ with $y=0$.
If $C_1 > D_1$, that is, $C$ lies to the right of $D$,
then the quadrilateral $Y D C S$ is forward invariant under $f$
(see Lemma 1 of \cite{Gl17} and compare Lemma \ref{le:forwardInvariantSet} below).
If instead $C_1 < D_1$, then $f(D)$ lies outside $Y D C S$ and so this quadrilateral
is not forward invariant.
Numerical explorations suggest that $f$ has no attractor in this case.

From \eqref{eq:f} we immediately obtain
\begin{equation}
C_1 = \frac{-S_2}{\delta_R - \tau_R + \frac{\delta_R}{S_2}}.
\label{eq:C1}
\end{equation}
By then combining \eqref{eq:D1}--\eqref{eq:C1} we obtain, after much simplification,
\begin{equation}
C_1 - D_1 = \frac{\phi(\tau_L,\delta_L,\tau_R,\delta_R)}
{\left( \tau_L - \delta_L - 1 \right) \left( \delta_R - \tau_R \lambda_L^u \right)},
\label{eq:C1minusD1}
\end{equation}
where
\begin{equation}
\phi(\tau_L,\delta_L,\tau_R,\delta_R)
= \delta_R - \left( \tau_R + \delta_L + \delta_R - (1 + \tau_R) \lambda_L^u \right) \lambda_L^u \,.
\label{eq:phi}
\end{equation}

Since the denominator of \eqref{eq:C1minusD1} is positive by \eqref{eq:paramCond1},
the condition $\phi > 0$ ensures that $C_1 > D_1$.
The parameter region $\cR$ of \cite{BaYo98}
is defined by the constraints \eqref{eq:paramCond1}
and $\phi > 0$, see Fig.~\ref{fig:paramRegionBaYo98}.

\begin{figure}[b!]
\begin{center}
\setlength{\unitlength}{1cm}
\begin{picture}(8.6,6)
\put(.6,0){\includegraphics[height=6cm]{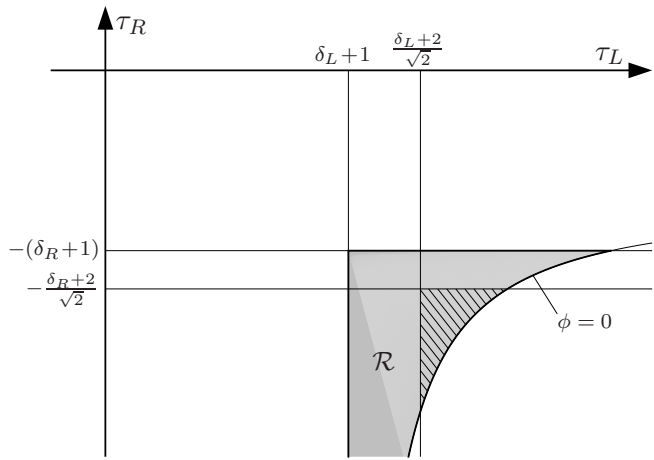}}
\put(7.84,5.28){\small $\tau_L$}
\put(1.47,5.69){\small $\tau_R$}
\put(4.11,5.27){\scriptsize $\delta_L \!+\! 1$}
\put(5.12,5.36){\scriptsize $\frac{\delta_L + 2}{\sqrt{2}}$}
\put(.03,2.68){\scriptsize $-(\delta_R \!+\! 1)$}
\put(.27,2.16){\scriptsize $-\frac{\delta_R + 2}{\sqrt{2}}$}
\put(7.34,1.71){\scriptsize $\phi = 0$}
\put(4.88,1.15){\footnotesize $\cR$}
\end{picture}
\caption{
The parameter region $\cR$: \eqref{eq:paramCond1} and $\phi > 0$,
where $\phi$ is given by \eqref{eq:phi}.
The striped region indicates parameter values valid for Theorem \ref{th:transitive}.
(This figure was created using $\delta_L = 0.2$ and $\delta_R = 0.4$.)
\label{fig:paramRegionBaYo98}
}
\end{center}
\end{figure}

\subsubsection*{Lyapunov exponents}

Let $\Sigma_\infty \subseteq \mathbb{R}^2$ be the set of points whose forward orbits intersect $x=0$.
Then the Jacobian matrix $\rD f^n (z)$ is well-defined for all
$z \in \mathbb{R}^2 \setminus \Sigma_\infty$ and all $n \ge 1$.
The {\em Lyapunov exponent} of a point $z \in \mathbb{R}^2 \setminus \Sigma_\infty$
in a direction $v \in \rT \mathbb{R}^2$ is defined as
\begin{equation}
\lambda(z,v) = \lim_{n \to \infty} \frac{1}{n} \,\ln \left( \left\| \rD f^n (z) v \right\| \right),
\label{eq:lyapExp}
\end{equation}
assuming this limit exists.
Oseledets' theorem \cite{BaPe07,EcRu85,Vi14} gives conditions under which \eqref{eq:lyapExp}
is well-defined for almost all points in an invariant set.
The Lyapunov exponent represents the asymptotic rate of expansion in the direction $v$.
For bounded invariant sets, positive Lyapunov exponents are part of the standard definitions of chaos.
The following theorem uses Lyapunov exponents to demonstrate robust chaos throughout $\cR$.

\begin{theorem}
Suppose \eqref{eq:paramCond1} is satisfied and $\phi > 0$.
Then \eqref{eq:f} has a topological attractor $\Lambda$
with the property that for any $z \in \Lambda \setminus \Sigma_\infty$,
if the limit \eqref{eq:lyapExp} exists with $v = \begin{bmatrix} 1 \\ 0 \end{bmatrix}$,
then $\lambda(z,v) > 0$.
\label{th:Lyapunov}
\end{theorem}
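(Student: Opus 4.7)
The proof will proceed by assembling the two main constructions of the paper. The attractor $\Lambda$ comes from the trapping region $\Omega_{\rm trap}$ identified in Section~\ref{sec:trappingRegion}: taking
\begin{equation*}
\Lambda = \bigcap_{n \geq 0} f^n\left(\overline{\Omega_{\rm trap}}\right)
\end{equation*}
produces a nonempty, compact, forward invariant set that attracts a neighbourhood, hence is a topological attractor. Since $\Omega_{\rm trap}$ is built on the quadrilateral $Y D C S$ (which is forward invariant precisely because $\phi > 0$ ensures $C_1 > D_1$), this step is essentially a packaging of the material in Section~\ref{sec:trappingRegion}.

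To obtain positivity of the Lyapunov exponent in the direction $v = [1,0]^{\sf T}$, I would use the invariant expanding cone $\mathcal{K}$ from Section~\ref{sec:iec}. Its defining properties are that $\rD f(z) \mathcal{K} \subset \mathcal{K}$ for every $z$ off the switching manifold, together with a uniform expansion bound $\|\rD f(z) w\| \geq \mu \|w\|$ for some $\mu > 1$ and every $w \in \mathcal{K}$.

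The central step is to verify that $v$ enters $\mathcal{K}$ after at most one application of $\rD f$. Because $\rD f(z) \in \{A_L, A_R\}$ off the switching manifold, this reduces to showing that $A_L v = [\tau_L, -\delta_L]^{\sf T}$ and $A_R v = [\tau_R, -\delta_R]^{\sf T}$ both lie in $\mathcal{K}$ throughout $\cR$. Granting this, for every $z \in \Lambda \setminus \Sigma_\infty$ we have $\rD f^n(z) v \in \mathcal{K}$ for all $n \geq 1$ by cone invariance, and iterating the expansion bound $n-1$ times gives
\begin{equation*}
\|\rD f^n(z) v\| \geq \mu^{n-1} \|\rD f(z) v\|.
\end{equation*}
Taking $\tfrac{1}{n} \log$ of both sides and letting $n \to \infty$ (conditional on the limit in \eqref{eq:lyapExp} existing) yields $\lambda(z,v) \geq \log \mu > 0$, as required.

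The main obstacle is the uniform cone-membership of $A_L v$ and $A_R v$ over all of $\cR$. This amounts to verifying the cone-defining condition from Section~\ref{sec:iec} for these two specific image vectors, uniformly for every $(\tau_L, \delta_L, \tau_R, \delta_R)$ satisfying \eqref{eq:paramCond1} and $\phi > 0$. One expects $\mathcal{K}$ to have been engineered precisely so that the images of the horizontal direction land inside it; if for some parameters the first image just fails to enter $\mathcal{K}$, a finite-iteration argument (tracking $\rD f^k(z) v$ for $k$ up to some bound depending only on parameters) would suffice, at the cost of replacing $\mu$ by $\mu^{1/k}$ in the final estimate.
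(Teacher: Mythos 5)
Your proposal is correct and takes essentially the same route as the paper: the attractor is extracted from the trapping region $\Omega_{\rm trap}$ of \S\ref{sec:trappingRegion}, and positivity of the exponent follows by iterating the invariant expanding cone of Proposition \ref{pr:iec} along the orbit, yielding $\|\rD f^n(z)v\| \ge c^n$ and hence $\liminf_{n\to\infty}\frac{1}{n}\ln\|\rD f^n(z)v\| \ge \ln c > 0$. The one point where you diverge --- treating cone-membership of $A_L v$ and $A_R v$ as the ``main obstacle'' requiring a uniform verification over $\cR$, with a fallback finite-iteration argument --- is vacuous: the cone $\Psi_K$ consists of vectors of slope $m \in K = [q_L, q_R]$ with $q_L = -\lambda_L^s \in (-1,0)$ and $q_R = -\lambda_R^s \in (0,1)$, so $0 \in K$ and $v = \begin{bmatrix} 1 \\ 0 \end{bmatrix}$ lies in $\Psi_K$ from the outset (as the paper notes parenthetically), after which membership of all iterates is immediate from invariance.
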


We have not been able to show that the conditions of Oseledets' theorem are satisfied,
or verify that the limit \eqref{eq:lyapExp} exists directly.
However, below we actually show that the infimum limit of the right hand-side of \eqref{eq:lyapExp}
is positive, thus even if the limit does not exist the dynamics must still be locally expanding.
Although the two-dimensional Lebesgue measure of $\Sigma_\infty$ is zero
(because it is a countable union of measure zero sets),
we do not know that $\mu \left( \Sigma_\infty \right) = 0$,
where $\mu$ is the invariant probability measure associated with $\Lambda$.
Also, it is not known whether or not $\Lambda$ is unique,
although numerical simulations by several authors have failed to find parameter values in $\cR$
for which $f$ has multiple attractors.

\subsubsection*{A homoclinic connection and a transitive attractor}

\begin{figure}[b!]
\begin{center}
\setlength{\unitlength}{1cm}
\begin{picture}(15,7.5)
\put(0,0){\includegraphics[height=7.5cm]{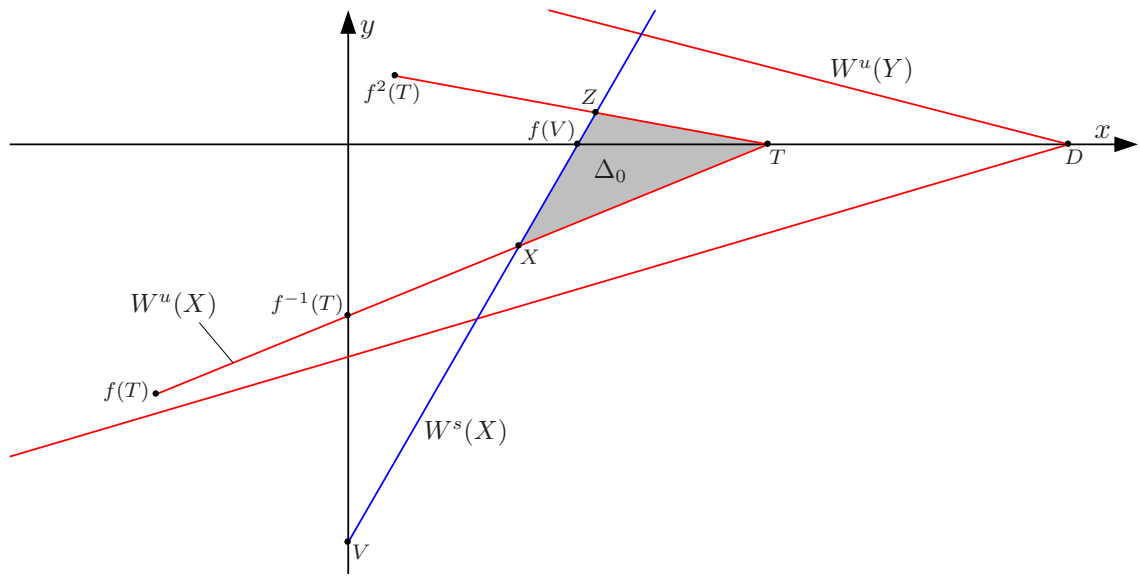}}
\put(14.42,5.8){\small $x$}
\put(4.66,7.2){\small $y$}
\put(14.02,5.45){\scriptsize $D$}
\put(6.78,4.13){\scriptsize $X$}
\put(4.55,.2){\scriptsize $V$}
\put(6.88,5.83){\scriptsize $f(V)$}
\put(3.47,3.51){\scriptsize $f^{-1}(T)$}
\put(10.1,5.45){\scriptsize $T$}
\put(1.24,2.36){\scriptsize $f(T)$}
\put(4.7,6.32){\scriptsize $f^2(T)$}
\put(7.59,6.26){\scriptsize $Z$}
\put(7.76,5.27){\footnotesize $\Delta_0$}
\put(10.9,6.61){\footnotesize $W^u(Y)$}
\put(5.5,1.8){\footnotesize $W^s(X)$}
\put(1.6,3.49){\footnotesize $W^u(X)$}
\end{picture}
\caption{
Initial portions of the stable and unstable manifolds of the fixed point $X$.
\label{fig:qqInvManifolds_b}
}
\end{center}
\end{figure}

Next we describe $W^s(X)$ and $W^u(X)$ in more detail.
Since the eigenvalues associated with $X$ are negative,
$W^s(X)$ and $W^u(X)$ each have one dynamically independent branch.
Let $T = (T_1,0)$ denote the intersection of $E^u(X)$ with $y=0$,
and let $V = (0,V_2)$ denote the intersection of $E^s(X)$ with $x=0$, see Fig.~\ref{fig:qqInvManifolds_b}.
Then $W^u(X)$ coincides with $E^u(X)$ on $\lineSeg{T}{f(T)}$,
and $W^s(X)$ coincides with $E^s(X)$ on $\lineSeg{V}{f^{-1}(V)}$.

As we follow $W^u(X)$ outwards, the first part of $W^u(X)$ that does not coincide with $E^u(X)$
is the line segment $\lineSeg{T}{f^2(T)}$.
Let
\begin{equation}
Z = \lineSeg{T}{f^2(T)} \cap E^s(X),
\label{eq:Z}
\end{equation}
if this point of intersection exists.
The point $Z$ corresponds to a transverse intersection
between the stable and unstable manifolds of $X$ and
implies there exists a chaotic orbit.
This transverse intersection exists if and only if $f^2(T)$ lies to the left of $E^s(X)$,
which can be equated to a condition on the parameter values of $f$ (see Lemma 2 of \cite{Gl17}).

Assuming $Z$ exists, let $\Delta_0$ be the (compact filled) triangle $X T Z$.
Then $\Delta = \bigcup_{n=0}^\infty f^n(\Delta_0)$ is forward invariant.
Also let $\tilde{\Delta} = \bigcap_{n=0}^\infty f^n(\Delta)$.

\begin{theorem}
Suppose \eqref{eq:paramCond1} is satisfied, $\delta_L < 1$, $\delta_R < 1$, $\phi > 0$, and
\begin{align}
\tau_L &> \frac{\delta_L + 2}{\sqrt{2}}, &
\tau_R &< -\frac{\delta_R + 2}{\sqrt{2}}.
\label{eq:paramCond2}
\end{align}
Then
\begin{enumerate}
\setlength{\itemsep}{0pt}
\item
$f^2(T)$ lies to the left of $E^s(X)$ (so $Z$ exists),
\item
$\tilde{\Delta} = {\rm cl} \left( W^u(X) \right)$, and
\item
$f$ is transitive on $\tilde{\Delta}$.
\end{enumerate}
\label{th:transitive}
\end{theorem}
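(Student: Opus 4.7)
The plan is to establish the three parts in order, drawing throughout on the invariant expanding cone from \S\ref{sec:iec}.

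\emph{Part (i).} This reduces to a finite computation. Since $T$ lies on $E^u(X)\cap\{y=0\}$, we have $T-X = t\,v$ where $v$ is the unstable eigenvector of $A_R$, and hence $f(T)-X = \lambda_R^u(T-X)$. A brief calculation using $X_2=-\delta_R X_1$ and the explicit eigenvector slope gives $T_1-X_1 = |\lambda_R^u|\,X_1$, so $f(T)_1 = X_1\bigl(1-(\lambda_R^u)^2\bigr)<0$ as soon as $|\lambda_R^u|>1$. Consequently $f^2(T) = A_L f(T) + (1,0)^{\sf T}$ is explicit, and the claim that $f^2(T)$ lies to the left of $E^s(X)$ reduces to a single polynomial inequality in $\tau_L,\delta_L,\tau_R,\delta_R$. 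I would verify this inequality algebraically under the hypotheses of the theorem; the $\sqrt{2}$ thresholds in \eqref{eq:paramCond2}, which are equivalent to $\lambda_L^u,|\lambda_R^u|>\sqrt{2}$, are exactly what makes it hold.

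\emph{Part (ii).} The inclusion ${\rm cl}(W^u(X))\subseteq\tilde\Delta$ is immediate: the initial segment $\lineSeg{X}{T}$ of $W^u(X)$ is an edge of $\Delta_0$, so $W^u(X)=\bigcup_{n\ge 0}f^n(\lineSeg{X}{T})\subseteq\Delta$, and $f$-invariance of $W^u(X)$ gives $W^u(X)\subseteq f^n(\Delta)$ for every $n$, whence $W^u(X)\subseteq\tilde\Delta$; closedness of $\tilde\Delta$ finishes this direction. For the reverse inclusion, fix $p\in\tilde\Delta$. Since $\Delta=\bigcup_{k\ge 0}f^k(\Delta_0)$, the containment $\tilde\Delta\subseteq f^n(\Delta)=\bigcup_{k\ge n}f^k(\Delta_0)$ produces, for every $n$, an index $k_n\ge n$ with $p\in f^{k_n}(\Delta_0)$. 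The set $f^{k_n}(\Delta_0)$ is the connected image of the triangle $\Delta_0=XTZ$ under a piecewise-linear map and has $f^{k_n}(\lineSeg{X}{T})\subseteq W^u(X)$ as one of its boundary arcs. Under \eqref{eq:paramCond2} combined with $\delta_L,\delta_R<1$, the stable eigenvalues $\lambda_L^s$ and $|\lambda_R^s|$ are both less than $1/\sqrt{2}$, so the contraction transverse to the cone is at least $\sqrt{2}$ per iterate; this forces the width of $f^{k_n}(\Delta_0)$ transverse to the arc $f^{k_n}(\lineSeg{X}{T})$ to be $O(2^{-k_n/2})$. Hence $p$ lies within $O(2^{-k_n/2})$ of $W^u(X)$, and letting $n\to\infty$ gives $p\in{\rm cl}(W^u(X))$.

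\emph{Part (iii).} Transitivity follows by combining the thinning analysis of Part (ii) with the expansion of arcs tangent to the cone. Given nonempty open sets $U,V$ meeting $\tilde\Delta$, Part (ii) provides an arc $\gamma\subseteq W^u(X)\cap U$ whose tangent lies in the invariant expanding cone. Then $f^n(\gamma)\subseteq W^u(X)$ grows exponentially in length, and the same transverse-thinning argument used in Part (ii) shows that for sufficiently large $n$ the arc $f^n(\gamma)$ is $\varepsilon$-dense in $\tilde\Delta$. Choosing $\varepsilon$ smaller than the distance from a chosen point of $V\cap\tilde\Delta$ to $\partial V$ yields $f^N(\gamma)\cap V\ne\emptyset$, hence $f^N(U)\cap V\ne\emptyset$.

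The main obstacle is making the transverse-thinning and $\varepsilon$-density claims rigorous in the piecewise-linear setting. The kinks in $f^k(\Delta_0)$, produced by the preimages of the switching manifold $\{x=0\}$, proliferate exponentially, and one must show their accumulation leaves no pockets of $\tilde\Delta$ uncovered by the expanding-cone arcs $f^n(\gamma)$. This is where both the stronger hyperbolicity in \eqref{eq:paramCond2} — giving net expansion and transverse contraction by factors exceeding $\sqrt{2}$ per iterate — and the transverse homoclinic point $Z$ from Part (i), which folds $W^u(X)$ back through itself, enter the argument in an essential way.
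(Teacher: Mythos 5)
There is a genuine gap, and it sits exactly where you flag it: the $\varepsilon$-density claim in part (iii) is not a technical loose end but the entire content of transitivity, and nothing in your proposal supplies it. Iterates $f^n(\gamma)$ of an arc in $W^u(X)$ grow in total length, but they are repeatedly cut by the switching manifold and could a priori accumulate on a proper subset of $\tilde\Delta$; asserting they become $\varepsilon$-dense assumes the conclusion. The paper never proves density. Instead it proves two concrete facts: (a) Lemma \ref{le:iterateLineSeg} — any segment in $\Omega$ with slope in $K$ has an iterate containing a segment $\lineSeg{P}{Q}$ stretching from $x=0$ to $y=0$, which is where $c>\sqrt{2}$ is used (expansion by $c^2>2$ every two iterates beats bisection at the switching manifold, so segments cannot stay short forever); and (b) Lemma \ref{le:U2minusV2} — the computation $U_2>V_2$, which forces any such $\lineSeg{P}{Q}\subseteq f(\Omega)$ to cross $E^s(X)$ transversally. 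Then, for the target set $N$, one pulls a sub-neighbourhood of a point of $N\cap W^u(X)$ \emph{backwards}: since $f^{-1}$ is affine on $x>0$ with saddle $X$, these preimages accumulate on $E^s(X)$ and stretch across $\Omega$, hence meet $\lineSeg{P}{Q}$. Your sketch contains neither the $U_2>V_2$ ingredient nor the backward-iteration step, and without them the argument does not close.

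Part (ii) also has a flaw in mechanism. You infer a transverse-thinning rate for $f^{k}(\Delta_0)$ from the stable \emph{eigenvalues} $\lambda_L^s,|\lambda_R^s|<1/\sqrt{2}$, but contraction rates of products of the two different matrices are not controlled by the eigenvalues of the factors (the stable eigendirections of $A_L$ and $A_R$ differ, and there is no invariant transverse direction), and ``width transverse to the arc'' is not well defined for the folded region $f^k(\Delta_0)$. The paper's route is via determinants: ${\rm Area}\left(f^n(\Delta)\right)\le\delta_{\max}^n\,{\rm Area}(\Delta)$ — this is precisely where $\delta_L,\delta_R<1$ enters — so the distance of $z\in\tilde\Delta$ to $\partial f^n(\Delta)$ tends to $0$, combined with the boundary characterisation $\partial f^n(\Delta)\subseteq\lineSeg{f^n(Z)}{f^{n+1}(Z)}\cup W^u(X)$ with $f^n(Z)\to X$ because $Z\in W^s(X)$. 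You track only the unstable boundary arc $f^{k_n}(\lineSeg{X}{T})$ and never control the image of the stable edge $\lineSeg{X}{Z}$, which is needed to conclude proximity to $W^u(X)$ rather than merely to some boundary piece. Finally, part (i) is left as a promissory note (``I would verify this inequality algebraically''); the paper closes it with a short explicit chain, $T_1-f^2(T)_1>-\tau_L f(T)_1>\sqrt{2}\,\left|f(T)_1\right|>T_1-X_1$, using $\left|f(T)_1\right|>\tfrac{1}{\sqrt{2}}\left(T_1-X_1\right)$ and $\tau_L>\sqrt{2}$, so at minimum that computation must actually be carried out.
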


Theorem \ref{th:transitive} is analogous to Theorems 2 and 5 of \cite{Mi80}
for the orientation-reversing case.
The conditions \eqref{eq:paramCond2} on the parameters of $f$
are equivalent to the following conditions on the eigenvalues of $A_L$ and $A_R$:
\begin{align}
\lambda_L^u &> \sqrt{2}, &
\lambda_R^u &< -\sqrt{2}.
\label{eq:paramCond2_alternate}
\end{align}
Certainly the conclusions of Theorem \ref{th:transitive} may be false
if \eqref{eq:paramCond2} is not satisfied.
For instance $f^2(T)$ may lie to the right of $E^s(X)$
(see Figure 1 of \cite{Gl17} for an example)
in which case ${\rm cl} \left( W^u(X) \right)$ has a fundamentally different character.
The conditions $\delta_L < 1$ and $\delta_R < 1$
are used at one place below to show that the area of $f^n(\Delta_0)$ decreases with $n$,
but we believe these conditions are actually unnecessary.

Theorem \ref{th:transitive} tells us that
in $\Delta$ the map $f$ has a unique chaotic attractor
equal to the closure of $W^u(X)$.
We have not proved that the quadrilateral $Y D C S$ doesn't contain other attractors.
Certainly $Y D C S$ may contain other invariant sets.
As an example, Fig.~\ref{fig:cantorSet} shows all periodic solutions of $f$ (except $Y$)
with period $\le 20$ for the parameter values
\begin{align}
\tau_L &= 1.6, &
\delta_L &= 0.4, &
\tau_R &= -1.6, &
\delta_R &= 0.4. 
\label{eq:paramExample}
\end{align}
This numerical result suggests that periodic solutions are dense in ${\rm cl} \left( W^u(X) \right)$
and form a Cantor set bounded away from ${\rm cl} \left( W^u(X) \right)$.
The Cantor set seems to be formed from the stable manifold of a period-$3$ solution (not shown).
We have observed a similar partition of the periodic solutions of $f$
for other parameter values including those that satisfy the conditions of Theorem \ref{th:transitive}.
This shows that the infinite intersection of the trapping region $\Omega_{\rm trap}$
(defined in the next section) is not always equal to ${\rm cl} \left( W^u(X) \right)$
which is different to the analogous situation in the orientation-reversing case \cite{Mi80}.

\begin{figure}[b!]
\begin{center}
\setlength{\unitlength}{1cm}
\begin{picture}(15,7.5)
\put(0,0){\includegraphics[height=7.5cm]{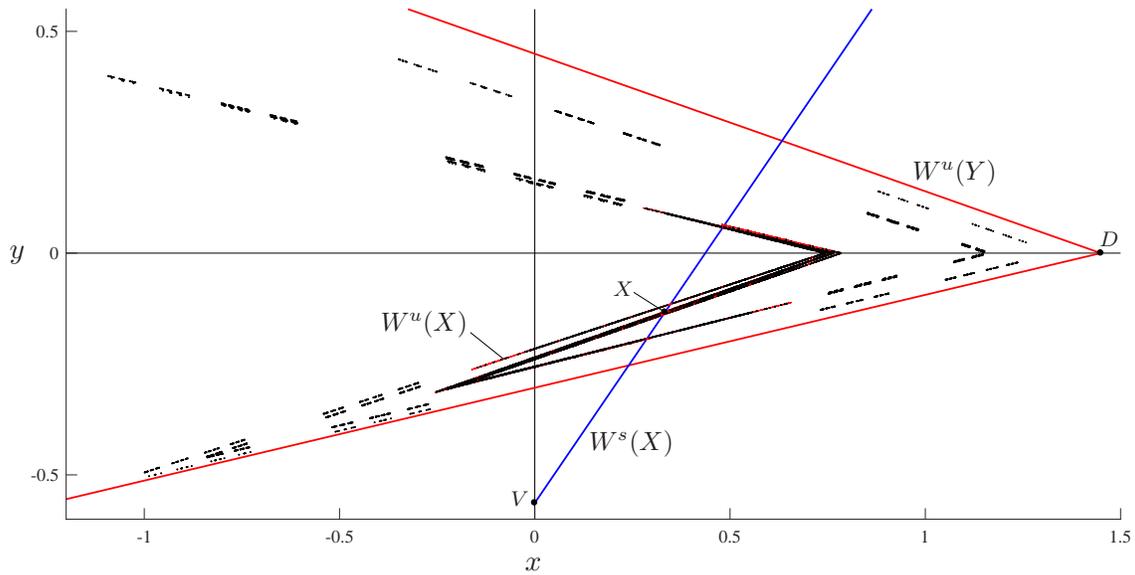}}
\put(6.85,0){\small $x$}
\put(0,4.15){\small $y$}
\put(14.49,4.31){\scriptsize $D$}
\put(8.02,3.65){\scriptsize $X$}
\put(6.66,.86){\scriptsize $V$}
\put(12,5.2){\footnotesize $W^u(Y)$}
\put(7.7,1.6){\footnotesize $W^s(X)$}
\put(4.93,3.19){\footnotesize $W^u(X)$}
\end{picture}
\caption{
A phase portrait of \eqref{eq:f} using the parameter values \eqref{eq:paramExample}.
This shows all periodic solutions (except $Y$) up to period $20$.
These were computed via a brute-force search and the algorithm of \cite{Du88}
to generate all possible symbolic itineraries.
The unstable manifold $W^u(X)$ was computed numerically by following it outwards from $X$
until no further growth could be discerned.
\label{fig:cantorSet}
}
\end{center}
\end{figure}

\section{A forward invariant region and a trapping region}
\label{sec:trappingRegion}
\setcounter{equation}{0}

Throughout this section we study $f$ subject to \eqref{eq:paramCond1} and $\phi > 0$.
This is the parameter region $\cR$ of \cite{BaYo98} shown in Fig.~\ref{fig:paramRegionBaYo98}.

As illustrated in Fig.~\ref{fig:qqInvManifolds_c},
let $B \in \lineSeg{Y}{D}$ be such that $\lineSeg{B}{f(D)}$
is parallel to $\lineSeg{Y}{S}$.
Let $\Omega$ be the triangle $B D f(D)$.
Below we show that $\Omega$ is forward invariant under $f$.

\begin{figure}[b!]
\begin{center}
\setlength{\unitlength}{1cm}
\begin{picture}(15,7.5)
\put(0,0){\includegraphics[height=7.5cm]{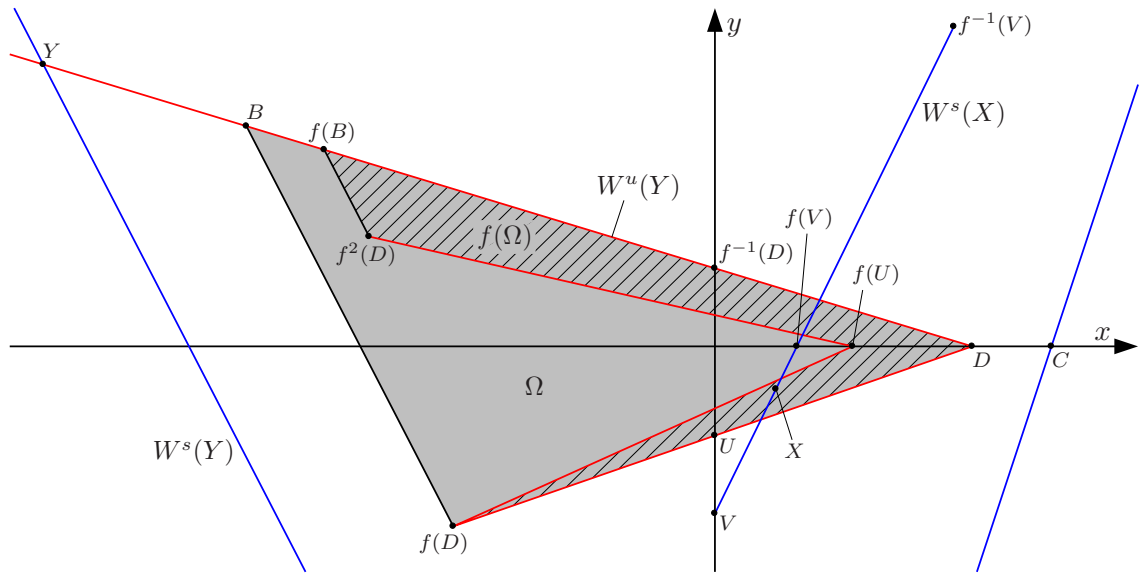}}
\put(14.42,3.09){\small $x$}
\put(9.53,7.2){\small $y$}
\put(13.86,2.73){\scriptsize $C$}
\put(.44,6.84){\scriptsize $Y$}
\put(9.42,4.16){\scriptsize $f^{-1}(D)$}
\put(12.79,2.73){\scriptsize $D$}
\put(9.43,1.58){\scriptsize $U$}
\put(5.44,.3){\scriptsize $f(D)$}
\put(11.2,3.88){\scriptsize $f(U)$}
\put(4.33,4.14){\scriptsize $f^2(D)$}
\put(3.14,6.03){\scriptsize $B$}
\put(3.96,5.78){\scriptsize $f(B)$}
\put(10.3,1.51){\scriptsize $X$}
\put(12.6,7.2){\scriptsize $f^{-1}(V)$}
\put(9.43,.56){\scriptsize $V$}
\put(10.3,4.63){\scriptsize $f(V)$}
\put(6.85,2.35){\footnotesize $\Omega$}
\put(6.215,4.365){\footnotesize $f(\Omega)$}
\put(1.9,1.5){\footnotesize $W^s(Y)$}
\put(7.8,5.03){\footnotesize $W^u(Y)$}
\put(12.12,6){\footnotesize $W^s(X)$}
\end{picture}
\caption{
The forward invariant region $\Omega$ and its image $f(\Omega)$.
\label{fig:qqInvManifolds_c}
}
\end{center}
\end{figure}

Given $\ee > 0$, let
\begin{equation}
B_\ee = B - \ee (D-Y) - \ee^2 (S-Y).
\label{eq:Bee}
\end{equation}
As illustrated in Fig.~\ref{fig:qqInvManifolds_d},
let $D_\ee$ be the point on $y=0$ for which $\lineSeg{B_\ee}{D_\ee}$ is parallel to $\lineSeg{Y}{D}$,
and let $F_\ee$ be the point on $x=0$ for which $\lineSeg{B_\ee}{F_\ee}$ is parallel to $\lineSeg{Y}{S}$.
Let $\Omega_{\rm trap}$ be the triangle $B_\ee D_\ee F_\ee$.
Below we show that if $\ee > 0$ is sufficiently small then
$\Omega_{\rm trap}$ is a trapping region for $f$,
i.e., $\Omega_{\rm trap}$ maps to its interior.
This ensures the existence of a topological attractor:
$\bigcap_{n=0}^\infty f \left( \Omega_{\rm trap} \right)$ is an attracting set by definition.
In \eqref{eq:Bee} the $(S-Y)$-term is smaller
than the $(D-Y)$-term to ensure that $D_\ee$ maps inside $\Omega_{\rm trap}$.

Our proofs use the following elementary principle
that motivates our definitions of $\Omega$ and $\Omega_{\rm trap}$.
If $\alpha \subseteq \mathbb{R}^2$ is a line segment in $x \le 0$
that is parallel to either $\lineSeg{Y}{D}$ or $\lineSeg{Y}{S}$,
then $f(\alpha)$ is parallel to $\alpha$.
This is because the directions of $\lineSeg{Y}{D}$ and $\lineSeg{Y}{S}$
are those of the eigenvectors of $A_L$.

\begin{lemma}
Suppose \eqref{eq:paramCond1} is satisfied and $\phi > 0$.
Then $f(\Omega) \subseteq \Omega$.
\label{le:forwardInvariantSet}
\end{lemma}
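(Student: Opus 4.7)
The plan is to decompose $\Omega$ at the switching manifold $x = 0$ and track the image of each half, exploiting the fact that two edges of $\Omega$ lie along, or parallel to, the eigendirections of $A_L$ at $Y$. Let $P := \lineSeg{B}{D} \cap \{x = 0\}$ and $U := \lineSeg{D}{f(D)} \cap \{x = 0\}$, so that $\Omega \cap \{x \le 0\}$ is a quadrilateral with vertices $B, P, U, f(D)$ and $\Omega \cap \{x \ge 0\}$ is a triangle with vertices $P, D, U$. A key initial observation is that $P = f^{-1}(D)$ on the left branch: since $\lineSeg{B}{D}$ lies on the line $E^u(Y)$, which is $f_L$-invariant with $Y$ fixed and scaling factor $\lambda_L^u$, the point $P \in E^u(Y) \cap \{x = 0\}$ maps to the first kink $D$ of $W^u(Y)$.

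Next, introduce coordinates based at $Y$ in the eigenvector basis $\{D - Y, S - Y\}$ of $A_L$, denoted $(s, t)$. In these coordinates $f_L$ is the diagonal map $(s, t) \mapsto (\lambda_L^u s, \lambda_L^s t)$, and the definitions of $B$ and $f(D)$ yield $B = (\alpha, 0)$, $D = (1, 0)$, $f(D) = (\alpha, \beta)$ for some $\alpha \in (0, 1)$ and $\beta < 0$. Thus $\Omega$ is a ``right triangle'' with legs parallel to the eigendirections and hypotenuse $\lineSeg{D}{f(D)}$. Because $f_L$ and $f_R$ are affine and $\Omega$ is convex, it suffices to verify containment in $\Omega$ of the images of the five extremal points $B, P, D, U, f(D)$. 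Three are immediate: $D$ and $f(D)$ are vertices of $\Omega$, while $f(B) = (\lambda_L^u \alpha, 0)$ lies on $\lineSeg{B}{D}$ provided $\lambda_L^u \alpha \le 1$, which corresponds to $B \in \lineSeg{Y}{P}$ and follows from \eqref{eq:paramCond1} after substituting the formula for $\alpha$.

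The remaining two checks --- $f^2(D) = (\lambda_L^u \alpha, \lambda_L^s \beta) \in \Omega$ and $f(U) \in \Omega$ --- carry the substance of the argument. For $f^2(D)$, the contraction $\lambda_L^s \in (0, 1)$ in the $t$-direction reduces containment in $\Omega$ to the algebraic inequality $\alpha \le (1 - \lambda_L^s)/(\lambda_L^u - \lambda_L^s)$; a routine rearrangement rewrites this as $\lambda_L^u (1 + \tau_R) \le \delta_L + \delta_R$, which is automatic since the left-hand side is negative by \eqref{eq:paramCond1}. For $f(U)$, the identity $f_L(0, U_2) = (1 + U_2, 0)$ places $f(U)$ on $y = 0$; since $\Omega \cap \{y = 0\}$ is the segment $\lineSeg{D}{W}$ with $W := \lineSeg{B}{f(D)} \cap \{y = 0\}$, the task becomes showing $W_1 \le 1 + U_2 \le D_1$. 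The upper inequality is routine, but the lower inequality is the main obstacle and should be precisely where the hypothesis $\phi > 0$ enters, via an algebraic identity relating the parametrisation of $U$ on $\lineSeg{D}{f(D)}$ to the expressions \eqref{eq:D1} and \eqref{eq:S2} for $D_1$ and $S_2$. Once these checks are complete, convexity of $\Omega$ and the piecewise affineness of $f$ give $f(\Omega) \subseteq \Omega$.
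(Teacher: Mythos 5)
Your decomposition and reduction are essentially the paper's own: split $\Omega$ along $x=0$ into the quadrilateral $B\,f^{-1}(D)\,U\,f(D)$ and the triangle $f^{-1}(D)\,D\,U$ (your $P$ is indeed $f^{-1}(D)$), use convexity and piecewise affineness to reduce everything to five extremal points, and check $f(B)$, $f^2(D)$, $f(U)$. Your algebra is correct where you carry it out: in the eigenbasis one really does get $\lambda_L^u \alpha < 1$ from \eqref{eq:paramCond1} alone (since $f(D)_1 = \tau_R D_1 + 1 < 0$ and $\lineSeg{B}{f(D)}$ has slope $-\lambda_L^u < -1$, so $B_1 < f(D)_1 < 0$), and the hypotenuse condition for $f^2(D)$ does reduce to $\lambda_L^u (1+\tau_R) \le \delta_L + \delta_R$, which holds automatically. (Minor slip: with basis $\{D-Y, S-Y\}$ the second coordinate of $f(D)$ is positive, not negative; nothing downstream depends on it.) The genuine gap is the unproved assertion that $\alpha \in (0,1)$. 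The positivity $\alpha > 0$ is not "from the definitions": a short computation yields the exact identity $\alpha = \phi \big/ \left( \lambda_L^u \left( \lambda_L^u - \lambda_L^s \right) \right)$, so $\alpha > 0$ is \emph{precisely} the hypothesis $\phi > 0$ --- geometrically, that $f(D)$ lies above $\lineSeg{Y}{S}$, equivalently that $B$ lies strictly between $Y$ and $f^{-1}(D)$. For $\phi < 0$ the construction breaks down ($B$ exits $\lineSeg{Y}{D}$ through $Y$) and forward invariance fails, so the one fact you take for free is the crux of the lemma.

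Relatedly, you have mislocated where $\phi > 0$ must enter. If $\alpha < 0$, your earlier checks already fail: $f(B)$ sits at $s = \lambda_L^u \alpha < \alpha$ and leaves $\Omega$ through the edge $\lineSeg{B}{f(D)}$, and the constraint $s \ge \alpha$ for $f^2(D)$ fails for the same reason --- your reduction of that check to the hypotenuse inequality silently used $\lambda_L^u \alpha \ge \alpha$. Conversely, once $\alpha > 0$ is known, the inequality $W_1 \le 1 + U_2$ that you flag as the main obstacle is immediate, with no further input from $\phi$: the line through $B$ and $f(D)$ meets $x=0$ at height $f(D)_2 + \lambda_L^u f(D)_1 < f(D)_2 < U_2$, so $U$ has eigencoordinate $s_U > \alpha > 0$, whence $f(U)$ has eigencoordinate $\lambda_L^u s_U > s_U > \alpha$ and lies to the right of $W$ on $y=0$. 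To repair the proof, establish $\alpha > 0$ the way the paper does: $f(C) \in \lineSeg{Y}{S}$ because $f^{-1}(S)$ lies in the first quadrant, and $C_1 > D_1$ (i.e.\ $\phi > 0$ via \eqref{eq:C1minusD1}) places $f(D)$ above and to the right of $f(C)$, hence above $\lineSeg{Y}{S}$; alternatively, verify the identity $\alpha = \phi \big/ \left( \lambda_L^u \left( \lambda_L^u - \lambda_L^s \right) \right)$ directly.
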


\begin{proof}
We have $f(D) = \left( \tau_R D_1 + 1, -\delta_R D_1 \right)$,
thus $f(D)$ lies in the quadrant $x,y < 0$
(because $D_1 > 1$, $\tau_R < -1$, and $\delta_R > 0$).
Also from \eqref{eq:f} we have
\begin{equation}
f(C) - f(D) = \big( \tau_R (C_1 - D_1) + 1, -\delta_R (C_1 - D_1) \big),
\nonumber
\end{equation}
thus $f(D)$ lies above and to the right of $f(C)$
(because $C_1 > D_1$ by \eqref{eq:C1minusD1}).
Also $f(C) \in \lineSeg{Y}{S}$ (because $f^{-1}(S)$ lies in $x,y > 0$),
thus $f(D)$ lies above $\lineSeg{Y}{S}$.

Consequently $B$ lies between $Y$ and $f^{-1}(D)$,
where $f^{-1}(D)$ is the intersection of $\lineSeg{Y}{D}$ with $x=0$.
Let $U$ be the intersection of $\lineSeg{D}{f(D)}$ with $x=0$, see Fig.~\ref{fig:qqInvManifolds_c}.

Write $\Omega = \Omega_L \cup \Omega_R$,
where $\Omega_L$ and $\Omega_R$ are the parts of $\Omega$ in $x \le 0$ and $x \ge 0$ respectively.
Notice $\Omega_L$ is the quadrilateral $U f(D) B f^{-1}(D)$,
and $\Omega_R$ is the triangle $D U f^{-1}(D)$.
Then $f(\Omega) = f(\Omega_L) \cup f(\Omega_R)$,
where $f(\Omega_L)$ is the quadrilateral $f(U) f^2(D) f(B) D$,
and $f(\Omega_R)$ is the triangle $f(D) f(U) D$.
Since $\Omega$ is convex, to complete the proof it suffices to show that each vertex of
$f(\Omega_L)$ and $f(\Omega_R)$ belongs to $\Omega$.

The point $f(B)$ lies between $B$ and $D$, thus $f(B) \in \Omega$.
Since $\lineSeg{B}{f(D)}$ is parallel to $\lineSeg{Y}{S}$,
$\lineSeg{f(B)}{f^2(D)}$ is also parallel to $\lineSeg{Y}{S}$.
Furthermore, since $\lineSeg{B}{f(D)}$ is located above $\lineSeg{Y}{S}$,
$\lineSeg{f(B)}{f^2(D)}$ is located above $\lineSeg{B}{f(D)}$ (because $\lambda_L^u > 1$).
Also $f^2(D)$ lies below $\lineSeg{Y}{D}$, and $f^2(D)_2 > 0$ because $f(D)_1 < 0$.
Thus $f^2(D) \in \Omega$.
Finally, $U$ lies above the line that passes through $B$ and $f(D)$,
thus $f(U)$ lies on $y=0$, above the line through $B$ and $f(D)$, and to the left of $D$,
thus $f(U) \in \Omega$.
This shows that all vertices of $f(\Omega_L)$ and $f(\Omega_R)$ belong to $\Omega$.
\end{proof}

\begin{figure}[b!]
\begin{center}
\setlength{\unitlength}{1cm}
\begin{picture}(15,7.5)
\put(0,0){\includegraphics[height=7.5cm]{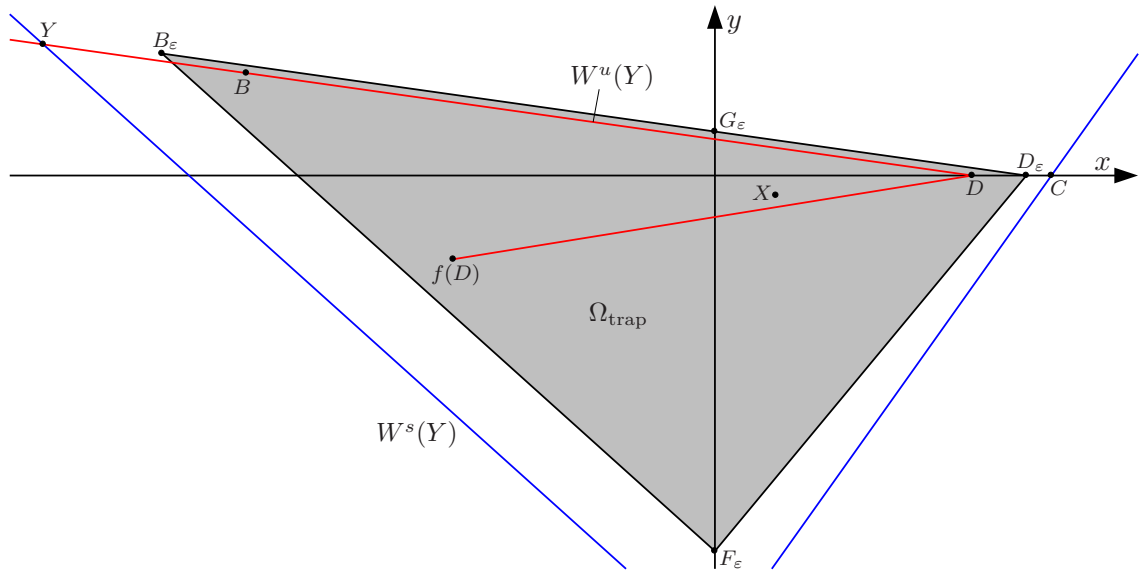}}
\put(14.42,5.31){\small $x$}
\put(9.53,7.2){\small $y$}
\put(13.83,4.97){\scriptsize $C$}
\put(.4,7.07){\scriptsize $Y$}
\put(2.97,6.32){\scriptsize $B$}
\put(12.7,4.97){\scriptsize $D$}
\put(5.6,3.84){\scriptsize $f(D)$}
\put(9.87,4.91){\scriptsize $X$}
\put(1.89,6.95){\scriptsize $B_\ee$}
\put(9.43,5.88){\scriptsize $G_\ee$}
\put(13.38,5.33){\scriptsize $D_\ee$}
\put(9.43,.05){\scriptsize $F_\ee$}
\put(7.7,3.3){\footnotesize $\Omega_{\rm trap}$}
\put(4.87,1.7){\footnotesize $W^s(Y)$}
\put(7.45,6.45){\footnotesize $W^u(Y)$}
\end{picture}
\caption{
The trapping region $\Omega_{\rm trap}$.
\label{fig:qqInvManifolds_d}
}
\end{center}
\end{figure}

\begin{lemma}
Suppose \eqref{eq:paramCond1} is satisfied and $\phi > 0$.
Then $f \left( \Omega_{\rm trap} \right) \subseteq {\rm int} \left( \Omega_{\rm trap} \right)$,
for sufficiently small $\ee > 0$.
\label{le:trappingRegion}
\end{lemma}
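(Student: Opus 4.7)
The plan is to upgrade the forward invariance of $\Omega$ (Lemma~\ref{le:forwardInvariantSet}) to strict forward invariance of the $\ee$-perturbed enlargement $\Omega_{\rm trap}$. As in Lemma~\ref{le:forwardInvariantSet}, I would partition $\Omega_{\rm trap} = \Omega_{\rm trap}^L \cup \Omega_{\rm trap}^R$ along $x = 0$: the left piece is the triangle $B_\ee G_\ee F_\ee$, where $G_\ee = \lineSeg{B_\ee}{D_\ee} \cap \{x = 0\}$, and the right piece is the triangle $G_\ee D_\ee F_\ee$. Since $\Omega_{\rm trap}$ is convex and $f$ is piecewise affine, it suffices to show that each of the vertex images $f(B_\ee)$, $f(G_\ee)$, $f(F_\ee)$, $f(D_\ee)$ lies in the interior of $\Omega_{\rm trap}$.

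The left piece is cleanest in the $A_L$-eigenbasis: writing $p = Y + u(D-Y) + v(S-Y)$, the map $f|_{x \le 0}$ acts diagonally as $(u, v) \mapsto (\lambda_L^u u, \lambda_L^s v)$. In these coordinates $\Omega_{\rm trap}$ is a right triangle with right angle at $B_\ee = (\beta - \ee, -\ee^2)$ and legs on the coordinate lines $u = \beta - \ee$ and $v = -\ee^2$, where $\beta \in (0, 1/\lambda_L^u)$ is the $u$-coordinate of $B$. The leg $\lineSeg{B_\ee}{F_\ee}$ maps into the line $u = (\beta - \ee)\lambda_L^u$, which lies strictly to the right of $u = \beta - \ee$ since $\lambda_L^u > 1$ and $\beta - \ee > 0$ for small $\ee$; the segment $\lineSeg{B_\ee}{G_\ee}$ maps into the line $v = -\lambda_L^s \ee^2$, which lies strictly above $v = -\ee^2$ since $0 < \lambda_L^s < 1$. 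These place each of $f(B_\ee)$, $f(G_\ee)$, $f(F_\ee)$ strictly inside the two legs meeting at $B_\ee$; their position relative to the third side $\lineSeg{D_\ee}{F_\ee}$ then follows from a short linear estimate using the defining constraint $\phi > 0$ of $\cR$.

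For the remaining vertex $D_\ee$, the argument is by continuity in $\ee$. As $\ee \to 0$, $D_\ee \to D$, and $f(D)$ is a vertex of $\Omega$ lying on the edge $\lineSeg{B}{F_0}$ of the limiting triangle $\Omega_0 = \Omega_{\rm trap}|_{\ee=0}$. The linear $-\ee(D-Y)$ shift in \eqref{eq:Bee} translates the edge $\lineSeg{B_\ee}{F_\ee}$ by $O(\ee)$ towards $Y$, so that $f(D)$ ends up strictly in the interior of $\Omega_{\rm trap}$, with margin $O(\ee)$. Since $|D_\ee - D| = O(\ee^2)$ and $f$ is Lipschitz near $D$, one has $|f(D_\ee) - f(D)| = O(\ee^2)$, which is absorbed by this $O(\ee)$ margin once $\ee$ is small enough.

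The main obstacle is the relative scaling of the two correction terms in \eqref{eq:Bee}. The linear $(D-Y)$-shift is needed to separate the edge $\lineSeg{B_\ee}{F_\ee}$ from $f(D)$, but by itself it would leave the top edge $\lineSeg{B_\ee}{D_\ee}$ lying on the $A_L$-invariant line $\lineSeg{Y}{D}$, so images of points on that edge would remain on it and strict inclusion would fail to leading order. The $\ee^2(S-Y)$ correction pushes the top edge down to $v = -\ee^2$, creating the strict gap $-\lambda_L^s \ee^2 > -\ee^2$ after one iteration; and its smaller, quadratic order is equally essential, because it keeps $D_\ee$ within $O(\ee^2)$ of $D$ and so keeps the image $f(D_\ee)$ comfortably inside the $O(\ee)$ margin engineered along $\lineSeg{B_\ee}{F_\ee}$.
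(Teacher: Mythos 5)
Your proposal is correct and takes essentially the same route as the paper's proof: split $\Omega_{\rm trap}$ along $x=0$ into the triangles $B_\ee G_\ee F_\ee$ and $G_\ee D_\ee F_\ee$, reduce to the four vertex images by convexity, use the eigenstructure of $A_L$ (your explicit $(u,v)$-coordinates are just the paper's ``parallel to the eigenvectors'' argument) for $f(B_\ee)$, $f(G_\ee)$, $f(F_\ee)$, and an $O(\ee)$-margin-versus-$O(\ee^2)$-perturbation argument for $f(D_\ee)$, including the same explanation of the two scales in \eqref{eq:Bee}. The one spot you defer to ``a short linear estimate using $\phi>0$'' --- the position of the left-piece vertex images relative to the hypotenuse $\lineSeg{D_\ee}{F_\ee}$ --- the paper dispatches more cleanly by observing that these images lie in $y \ge 0$ (indeed $f(G_\ee)$ and $f(F_\ee)$ lie on $y=0$, to the left of $D_\ee$), whereas the hypotenuse lies in $y \le 0$ and touches $y=0$ only at $D_\ee$.
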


\begin{proof}
Let $G_\ee$ be the intersection of $\lineSeg{B_\ee}{D_\ee}$ with $x=0$.
Then $f \left( \Omega_{\rm trap} \right)$ is the union of
the triangles $f(B_\ee) f(G_\ee) f(F_\ee)$ and $f(G_\ee) f(D_\ee) f(F_\ee)$.
Since $\Omega_{\rm trap}$ is convex, to complete the proof
it suffices to show that the vertices of these triangles belong to
${\rm int} \left( \Omega_{\rm trap} \right)$.

We begin with $f(B_\ee)$.
Assume $\ee > 0$ is sufficiently small that $B_\ee$ lies above $\lineSeg{Y}{S}$.
Since $\lineSeg{B_\ee}{D_\ee}$ and $\lineSeg{B_\ee}{F_\ee}$ are parallel to the eigenvectors of $A_L$
corresponding to the eigenvalues $\lambda_L^u > 1$ and $0 < \lambda_L^s < 1$, respectively,
the point $f(B_\ee)$ lies below $\lineSeg{B_\ee}{D_\ee}$ and above $\lineSeg{B_\ee}{F_\ee}$.
Also $B_\ee$ lies to the left of $x=0$, thus $f(B_\ee)$ lies above $y=0$.
These three constraints on $f(B_\ee)$ ensure $f(B_\ee) \in {\rm int} \left( \Omega_{\rm trap} \right)$.

For similar reasons $f(F_\ee)$ lies above $\lineSeg{B_\ee}{F_\ee}$ and below $\lineSeg{B_\ee}{D_\ee}$.
Since $f(F_\ee)$ lies on $y=0$ to the left of $D$,
we have $f(F_\ee) \in {\rm int} \left( \Omega_{\rm trap} \right)$.
Also $f(G_\ee)$ lies between $D$ and $D_\ee$,
thus $f(G_\ee) \in {\rm int} \left( \Omega_{\rm trap} \right)$.

Finally, in view of the definition of $B_\ee$ \eqref{eq:Bee},
the point $D_\ee$ is an order $\ee^2$ distance from $D$.
Thus $f(D_\ee)$ is an order $\ee^2$ distance from $f(D)$.
But $f(D)$ lies above $\lineSeg{B_\ee}{F_\ee}$ by a distance $k_1 \ee + k_2 \ee^2$, where $k_1 > 0$.
Thus, for sufficiently small $\ee > 0$, $f(D_\ee)$ lies above $\lineSeg{B_\ee}{F_\ee}$,
and so $f(D_\ee) \in {\rm int} \left( \Omega_{\rm trap} \right)$.
\end{proof}

\section{Invariant expanding cones}
\label{sec:iec}
\setcounter{equation}{0}

We first define invariant expanding cones for arbitrary $2 \times 2$ matrices.

\begin{definition}
Let $A$ be a real-valued $2 \times 2$ matrix and
let $K \subseteq \mathbb{R}$ be a closed interval.
The cone
\begin{equation}
\Psi_K = \left\{ a \begin{bmatrix} 1 \\ m \end{bmatrix} \,\middle|\, a \in \mathbb{R},\, m \in K \right\},
\label{eq:Lambda}
\end{equation}
is said to be
\begin{enumerate}
\setlength{\itemsep}{0pt}
\item
{\em invariant} if $A v \in \Psi_K$ for all $v \in \Psi_K$, and
\item {\em expanding} if there exists $c > 1$ such that
$\left\| A v \right\| \ge c \| v \|$ for all $v \in \Psi_K$.
\end{enumerate}
\label{df:iec}
\end{definition}

In \cite{Mi80}, Misiurewicz identified invariant expanding cones
for the Jacobian matrices of the Lozi map and its inverse.
This was done to demonstrate hyperbolicity and as part of his proof of transitivity.
Many groups have studied the linear algebra problem of
the existence of a cone that is invariant for a finite collection of matrices,
see for instance \cite{EdMc05,Pr10,RoSe10}.
Invariant expanding cones have also been used to give bounds on Lyapunov exponents for maps on
tori \cite{CoWo84,DaYo17,Wo85}.


\begin{proposition}\label{prop:iec}
Suppose \eqref{eq:paramCond1} is satisfied.
Let
\begin{align}
q_L &= -\frac{\tau_L}{2} \left( 1 - \sqrt{1 - \frac{4 \delta_L}{\tau_L^2}} \right), &
q_R &= -\frac{\tau_R}{2} \left( 1 - \sqrt{1 - \frac{4 \delta_R}{\tau_R^2}} \right),
\label{eq:qLqR}
\end{align}
and let $K = [q_L,q_R]$.
Then $\Psi_K$ is an invariant expanding cone for both $A_L$ and $A_R$.
If \eqref{eq:paramCond2} is also satisfied,
then the expansion condition is satisfied for some $c > \sqrt{2}$.
\label{pr:iec}
\end{proposition}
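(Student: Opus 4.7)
The plan is to parametrise each nonzero vector in $\Psi_K$ by its slope $m \in K$, writing $v = a(1,m)^{\sf T}$. Then $A_L v = a(\tau_L+m,\,-\delta_L)^{\sf T}$, which induces the slope map $\tilde{m}_L(m) = -\delta_L/(\tau_L+m)$ and the Rayleigh ratio $h_L(m) = ((\tau_L+m)^2+\delta_L^2)/(1+m^2)$; define $\tilde{m}_R$ and $h_R$ analogously. A key preliminary observation is that the formulas in \eqref{eq:qLqR} give precisely $q_L = -\lambda_L^s$ and $q_R = -\lambda_R^s$, so $(1,q_L)^{\sf T}$ is the expanding eigenvector of $A_L$ (yielding $h_L(q_L) = (\lambda_L^u)^2$) and $(1,q_R)^{\sf T}$ that of $A_R$ (yielding $h_R(q_R) = (\lambda_R^u)^2$).

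Invariance follows from monotonicity plus a fixed point. For $A_L$, the bound $\tau_L + m > 0$ on $K$ (valid since $\tau_L > 1$ and $q_L > -1$) makes $\tilde{m}_L$ smooth with $\tilde{m}_L'(m) = \delta_L/(\tau_L+m)^2 > 0$; combined with $\tilde{m}_L(q_L) = q_L$ and $\tilde{m}_L(q_R) < 0 < q_R$, this gives $\tilde{m}_L(K) = [q_L,\tilde{m}_L(q_R)] \subseteq K$. Symmetrically, $q_R < 1 < -\tau_R$ forces $\tau_R + m < 0$ on $K$, so $\tilde{m}_R$ is smooth and increasing, fixes $q_R$, and satisfies $\tilde{m}_R(q_L) > 0 > q_L$, yielding $\tilde{m}_R(K) \subseteq K$.

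For expansion by some $c > 1$, since $K$ is compact and $h_L, h_R$ are continuous it suffices to verify $h_L, h_R > 1$ pointwise on $K$. Rewrite $h_L(m) - 1 = N_1(m)/(1+m^2)$ with $N_1(m) = \tau_L^2 + 2\tau_L m + \delta_L^2 - 1$. Being linear with positive slope, $N_1$ attains its $K$-minimum at $m = q_L$, and the identities $\tau_L = \lambda_L^s + \lambda_L^u$, $\delta_L = \lambda_L^s \lambda_L^u$ yield the clean factorisation $N_1(q_L) = ((\lambda_L^u)^2 - 1)(1 + (\lambda_L^s)^2) > 0$ since $\lambda_L^u > 1$. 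The parallel calculation for $h_R$ has $N_1^R$ linear with negative slope, minimised at $q_R$ with value $((\lambda_R^u)^2 - 1)(1 + (\lambda_R^s)^2) > 0$. Taking $c$ to be the smaller of $\inf_K h_L^{1/2}$ and $\inf_K h_R^{1/2}$ then exceeds $1$.

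For the sharper $c > \sqrt{2}$ under \eqref{eq:paramCond2}, one instead needs $h_L, h_R > 2$ on $K$. The numerator $N_2(m) := (\tau_L+m)^2 + \delta_L^2 - 2(1+m^2)$ is now a downward-opening parabola in $m$, so its $K$-minimum is attained at an endpoint. At $m = q_L$ the same eigenvalue algebra yields $N_2(q_L) = ((\lambda_L^u)^2 - 2)(1 + (\lambda_L^s)^2)$, positive precisely by the condition $\lambda_L^u > \sqrt{2}$ from \eqref{eq:paramCond2_alternate}; at $m = q_R$ the roots of $N_2$ are $\tau_L \pm \sqrt{2\tau_L^2 + \delta_L^2 - 2}$, and $N_2(q_R) > 0$ is then immediate from $q_R < 1 < \tau_L$, placing $q_R$ strictly below the larger root. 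The mirror treatment handles $h_R$. The main technical concern across the proof is bookkeeping of signs as $\tau_R < 0$ reverses several comparisons; the fact that $\tilde{m}_L$ fixes $q_L$ while $\tilde{m}_R$ fixes $q_R$, pairing each matrix with its ``own'' endpoint of $K$, keeps the two cases structurally parallel and localises the only delicate estimates at the opposite endpoint, where they reduce to an elementary size comparison.
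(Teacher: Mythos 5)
Your proof is correct, and its skeleton matches the paper's: parametrise directions in $\Psi_K$ by slope, show $K=[q_L,q_R]$ is invariant under the slope maps via monotonicity and the fixed points $q_L,q_R$, and extract a uniform $c$ from compactness of $K$. Where you genuinely diverge is in the expansion estimates. The paper introduces the neutral slope $p(\tau,\delta)$ of \eqref{eq:p}, proves the ordering $|q|<|p|<|r|$ (Lemma \ref{le:qpr}) to conclude $\|Av\|>\|v\|$ via Lemma \ref{le:expanding}, and for $c>\sqrt{2}$ shows in Lemma \ref{le:sqrt2} that $H(q(\tau,\delta))$ in \eqref{eq:Hq} is increasing in $|\tau|$ and nonnegative at the boundary $|\tau|=(\delta+2)/\sqrt{2}$. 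You instead evaluate the quadratic numerators at the relevant endpoint of $K$ and factorise through the eigenvalues, $N_1(q)=\left((\lambda^u)^2-1\right)\left(1+(\lambda^s)^2\right)$ and $N_2(q)=\left((\lambda^u)^2-2\right)\left(1+(\lambda^s)^2\right)$, so positivity is read off from $\lambda^u$ exceeding $1$ (resp.\ $\sqrt{2}$, i.e.\ \eqref{eq:paramCond2_alternate}); this also exploits $h_L(q_L)=(\lambda_L^u)^2$, which is exactly the right invariant quantity. Your route is shorter and calculus-free, and makes transparent why $\sqrt{2}$ is the natural threshold; the paper's route buys a little more (expansion for every slope beyond $p$, the global picture of Fig.~\ref{fig:pqr}, and sharpness of the $\sqrt{2}$ boundary in the parameters) at the cost of the monotonicity computation. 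Your invariance step is likewise a touch cleaner, using the endpoint values $\tilde{m}_L(q_R)<0<q_R$ and $\tilde{m}_R(q_L)>0>q_L$ rather than the paper's appeal to Lemma \ref{le:Gp} and the location of the second fixed point $r$. One small repair is needed: at the right endpoint you justify $N_2(q_R)>0$ only by placing $q_R$ below the larger root $\tau_L+\sqrt{2\tau_L^2+\delta_L^2-2}$, but for a concave parabola you must also place it above the smaller root. This is immediate --- since $q_R<1<\tau_L$, all of $K$ lies left of the vertex $m=\tau_L$, so $N_2$ is increasing on $K$ and $N_2(q_R)\ge N_2(q_L)>0$ (equivalently, $N_2(q_L)>0$ already puts $q_L$, hence $q_R>q_L$, above the smaller root) --- so the endpoint check at $q_R$ is in fact superfluous, but as written it is incomplete rather than wrong; the mirror case at $q_L$ for $A_R$ needs the same one-line fix, with $K$ lying right of the vertex $m=\tau_R$.
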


For the remainder of this section we work towards a proof of Proposition \ref{pr:iec}.
Let
\begin{equation}
A = \begin{bmatrix} \tau & 1 \\ -\delta & 0 \end{bmatrix},
\label{eq:A}
\end{equation}
where $\tau,\delta \in \mathbb{R}$.
Given $m \in \mathbb{R}$,
the slope of $v = \begin{bmatrix} 1 \\ m \end{bmatrix}$ is $m$,
and the slope of $A v = \begin{bmatrix} \tau + m \\ -\delta \end{bmatrix}$ is
\begin{equation}
G(m) = \frac{-\delta}{\tau + m},
\label{eq:G}
\end{equation}
assuming $m \ne -\tau$.
The fact that $G$ is undefined at $m = -\tau$ will not be a problem below
because an infinite slope corresponds to a vector in direction $\begin{bmatrix} 0 \\ 1 \end{bmatrix}$.
This vector cannot belong to an invariant expanding cone
because $A \begin{bmatrix} 0 \\ 1 \end{bmatrix} = \begin{bmatrix} 1 \\ 0 \end{bmatrix}$,
hence the direction $\begin{bmatrix} 0 \\ 1 \end{bmatrix}$ is not of interest to us.

We have chosen to characterise the direction of tangent vectors by their slope,
rather than by an angle, because slopes are easier to deal with than angles algebraically.
Indeed the fixed point equation $G(m) = m$ is quadratic, and the fixed points are
\begin{align}
q(\tau,\delta) &= -\frac{\tau}{2} \left( 1 - \sqrt{1 - \frac{4 \delta}{\tau^2}} \right),
\label{eq:q} \\
r(\tau,\delta) &= -\frac{\tau}{2} \left( 1 + \sqrt{1 - \frac{4 \delta}{\tau^2}} \right),
\label{eq:r}
\end{align}
assuming $\tau^2 > 4 \delta$.

Notice that $q_L = q(\tau_L,\delta_L)$ and $q_R = q(\tau_R,\delta_R)$, see \eqref{eq:qLqR}.
Notice also that $q(\tau,\delta)$ and $r(\tau,\delta)$ are the slopes of the eigenvectors of $A$.
If the eigenvalues of $A$ are real and distinct, call them $\lambda^s$ and $\lambda^u$,
then the slopes of the eigenvectors are $-\lambda^u$ (corresponding to $\lambda^s$)
and $-\lambda^s$ (corresponding to $\lambda^u$).
It follows that $q_L = -\lambda_L^s \in (-1,0)$ and $q_R = -\lambda_R^s \in (0,1)$.

For $v = \begin{bmatrix} 1 \\ m \end{bmatrix}$ we have
\begin{align}
\| v \| &= \sqrt{1 + m^2}, \\
\| A v \| &= \sqrt{(\tau + m)^2 + \delta^2}.
\end{align}
Solving $\| v \| = \| A v \|$ gives $m = p(\tau,\delta)$ where
\begin{equation}
p(\tau,\delta) = -\frac{\tau^2 + \delta^2 - 1}{2 \tau},
\label{eq:p}
\end{equation}
assuming $\tau \ne 0$.
We first show that $p$, $q$, and $r$ appear as in Fig.~\ref{fig:pqr}.

\begin{figure}[b!]
\begin{center}
\setlength{\unitlength}{1cm}
\begin{picture}(8,4)
\put(0,0){\includegraphics[height=4cm]{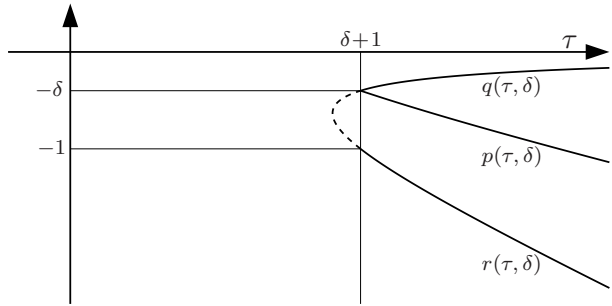}}
\put(7.36,3.41){\small $\tau$}
\put(4.415,3.43){\scriptsize $\delta \!+\! 1$}
\put(.36,2.77){\scriptsize $-\delta$}
\put(.39,1.99){\scriptsize $-1$}
\put(6.3,2.82){\scriptsize $q(\tau,\delta)$}
\put(6.3,1.89){\scriptsize $p(\tau,\delta)$}
\put(6.3,.45){\scriptsize $r(\tau,\delta)$}
\end{picture}
\caption{
The functions $p$ \eqref{eq:p}, $q$ \eqref{eq:q}, and $r$ \eqref{eq:r}
for $\tau > \delta + 1$ and a fixed value of $\delta \in (0,1)$.
\label{fig:pqr}
}
\end{center}
\end{figure}

\begin{lemma}
Suppose $\delta > 0$ and $|\tau| > \delta + 1$.
Then
\begin{equation}
|q(\tau,\delta)| < |p(\tau,\delta)| < |r(\tau,\delta)|.
\label{eq:qpr}
\end{equation}
\label{le:qpr}
\end{lemma}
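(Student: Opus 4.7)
\textbf{Proof plan for Lemma \ref{le:qpr}.}

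The plan is to exploit that $q$ and $r$ are the two roots of the single quadratic $m^2 + \tau m + \delta = 0$, while $p$ is the unique zero of an \emph{affine} function of $m$; I can then locate $p$ strictly between $q$ and $r$ by a sign argument and finally convert betweenness into the claimed inequalities on absolute values.

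First I would record via Vi\`ete's formulas that $qr = \delta > 0$ and $q + r = -\tau$, so $q$ and $r$ share the common sign $-\mathrm{sign}(\tau)$, and that the assumption $\delta > 0$ together with $|\tau| > \delta + 1$ makes $A$ saddle-type with real eigenvalues satisfying $|\lambda^s| < 1 < |\lambda^u|$. Via the identifications $q = -\lambda^s$ and $r = -\lambda^u$ recalled just above the lemma, this yields $|q| < 1 < |r|$. Because $q$ and $r$ carry the same sign, the chain $|q| < |p| < |r|$ is equivalent to the statement that $p$ lies strictly between $q$ and $r$ on the real line, so the target reduces to a strict betweenness claim.

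Next I would compute directly
\[
\|Av\|^2 - \|v\|^2 = (\tau+m)^2 + \delta^2 - (1+m^2) = 2\tau m + \tau^2 + \delta^2 - 1,
\]
which is affine in $m$ and, by \eqref{eq:p}, has its unique zero at $m = p$. At $m = q$ the vector $v = \begin{bmatrix} 1 \\ q \end{bmatrix}$ is an eigenvector of $A$ for $\lambda^u$, so $\|Av\|^2 - \|v\|^2 = \bigl((\lambda^u)^2 - 1\bigr)\|v\|^2 > 0$; at $m = r$ it is an eigenvector for $\lambda^s$, so the same expression equals $\bigl((\lambda^s)^2 - 1\bigr)\|v\|^2 < 0$. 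Since this affine function takes opposite signs at $q$ and $r$, its unique zero $p$ lies strictly between them.

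The only mildly technical step is the eigenvalue estimate $|\lambda^s| < 1 < |\lambda^u|$ from $\delta > 0$ and $|\tau| > \delta + 1$, which I would dispatch by a quadratic-formula computation (equivalently, by checking the sign of the characteristic polynomial at $\pm 1$). This is routine and not the main obstacle; the substantive idea is simply to recognise the affine structure of $\|Av\|^2 - \|v\|^2$ and to read off its values at the two eigendirections.
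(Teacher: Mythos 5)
Your proposal is correct, but it proves the lemma by a genuinely different route than the paper. The paper's proof is a direct computation: it writes out $|p(\tau,\delta)|-|q(\tau,\delta)|$ and $|p(\tau,\delta)|-|r(\tau,\delta)|$ from the explicit formulas \eqref{eq:p}--\eqref{eq:r} and bounds the radical via $|\tau|\sqrt{\tau^2-4\delta} > (\delta+1)\left|\delta-1\right|$, concluding both inequalities by brute-force algebra. You instead observe that $h(m) = \|Av\|^2 - \|v\|^2 = 2\tau m + \tau^2 + \delta^2 - 1$ is \emph{affine} in $m$ with unique zero at $m = p(\tau,\delta)$, and evaluate its sign at the two eigendirections: at slope $q = -\lambda^s$ the vector is an eigenvector for $\lambda^u$, giving $h(q) = \bigl((\lambda^u)^2-1\bigr)\|v\|^2 > 0$, while at slope $r = -\lambda^u$ it is an eigenvector for $\lambda^s$, giving $h(r) < 0$; strict monotonicity of $h$ then places $p$ strictly between $q$ and $r$, and since $qr = \delta > 0$ forces $q$ and $r$ to share a sign with $|q| = |\lambda^s| < 1 < |\lambda^u| = |r|$, betweenness converts to the claimed chain of absolute values. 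Both arguments are complete (your remaining obligation, $|\lambda^s| < 1 < |\lambda^u|$, follows exactly as you say from the sign of the characteristic polynomial at $\pm 1$ together with $\chi(0) = \delta > 0$, and the identifications $q = -\lambda^s$, $r = -\lambda^u$ are stated in the paper just before the lemma). What your route buys is conceptual transparency: it explains \emph{why} the lemma holds --- the norm-neutral slope $p$ must separate the expanding eigendirection from the contracting one --- and it avoids all manipulation of nested radicals; it also dovetails with the paper's own Lemma \ref{le:expanding}, which computes precisely the same affine expression $\|Av\|^2 - \|v\|^2$. What the paper's computation buys is self-containedness: it needs no eigenvalue interpretation at all, only the closed-form expressions, which keeps the lemma usable as a standalone inequality about the functions $p$, $q$, $r$.
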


\begin{proof}
Observe:
\begin{align*}
\tau^2 \sqrt{1 - \frac{4 \delta}{\tau^2}}
&= |\tau| \sqrt{\tau^2 - 4 \delta} \\
&> \left( \delta+1 \right) \sqrt{ \left( \delta+1 \right)^2 - 4 \delta} \\
&= \left( \delta+1 \right) \big| \delta-1 \big|.
\end{align*}
Thus
\begin{align*}
|p(\tau,\delta)| - |q(\tau,\delta)|
&= \frac{1}{2 |\tau|} \left( \tau^2 + \delta^2 - 1 \right)
- \frac{|\tau|}{2} \left( 1 - \sqrt{1 - \frac{4 \delta}{\tau^2}} \right) \\
&> \frac{\delta+1}{2 |\tau|} \big( \delta - 1 + \big| \delta-1 \big| \big) \\
&\ge 0.
\end{align*}
Similarly,
\begin{align*}
|p(\tau,\delta)| - |r(\tau,\delta)|
&= \frac{1}{2 |\tau|} \left( \tau^2 + \delta^2 - 1 \right)
- \frac{|\tau|}{2} \left( 1 + \sqrt{1 - \frac{4 \delta}{\tau^2}} \right) \\
&< \frac{\delta+1}{2 |\tau|} \big( \delta - 1 - \big| \delta-1 \big| \big) \\
&\le 0.
\end{align*}
\end{proof}

\begin{lemma}
Suppose $\delta > 0$ and $|\tau| > \delta + 1$.
Then $\frac{d G}{d m} > 0$ for all $m \ne -\tau$,
and $\frac{d G}{d m} \left( q(\tau,\delta) \right) < 1$.
\label{le:Gp}
\end{lemma}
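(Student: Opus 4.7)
The first claim is immediate: differentiating $G(m)=-\delta/(\tau+m)$ gives $\frac{dG}{dm}=\delta/(\tau+m)^2$, which is strictly positive for all $m\ne -\tau$ since $\delta>0$.

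For the second claim, my plan is to express $\frac{dG}{dm}(q)$ in a form where Lemma \ref{le:qpr} applies directly. Since $q=q(\tau,\delta)$ is a fixed point of $G$, the identity $G(q)=q$ rearranges to $\tau+q=-\delta/q$, so
\begin{equation}
\frac{dG}{dm}(q)=\frac{\delta}{(\tau+q)^2}=\frac{\delta}{\delta^2/q^2}=\frac{q^2}{\delta}.
\nonumber
\end{equation}
The next step is to bring $r=r(\tau,\delta)$ into the picture. Since $q$ and $r$ are the two roots of $m^2+\tau m+\delta=0$, Vi\`ete's formulas give $qr=\delta$ and $q+r=-\tau$. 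Substituting $\delta=qr$ yields
\begin{equation}
\frac{dG}{dm}(q)=\frac{q^2}{qr}=\frac{q}{r}.
\nonumber
\end{equation}

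It remains to show $q/r<1$. Because $qr=\delta>0$, the fixed points $q$ and $r$ have the same sign, so $q/r>0$. Lemma \ref{le:qpr} (applied with its hypotheses, which match ours) gives $|q|<|r|$, hence $q/r=|q|/|r|<1$. This delivers $\frac{dG}{dm}(q)<1$, as required.

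No serious obstacle is anticipated: the argument is purely algebraic, and the only nontrivial ingredient, the inequality $|q|<|r|$, is supplied by the preceding lemma. The only point requiring care is ensuring that $q$ and $r$ lie on the same side of zero so that the strict inequality $|q|/|r|<1$ translates to $q/r<1$, which is handled by noting $qr=\delta>0$.
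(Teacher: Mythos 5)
Your proof is correct, and for the second claim it takes a genuinely different route from the paper's. The paper substitutes the root relation $\tau+q=-\delta/q$ only \emph{once} into $\frac{dG}{dm}(q)=\frac{\delta}{(\tau+q)^2}$, arriving at $\frac{dG}{dm}(q)=\frac{-q}{\tau+q}=\frac{-1}{\tau/q+1}$, and then bounds this directly from the explicit radical formula for $q$: since $q/\tau>-\tfrac{1}{2}$, one gets $\tau/q+1<-1$ and hence a derivative in $(0,1)$. You substitute the relation \emph{twice}, giving $q^2/\delta$, and Vi\`ete's formula $qr=\delta$ turns this into the ratio $q/r$ of the two fixed points; then $qr=\delta>0$ supplies the common sign and Lemma \ref{le:qpr} supplies $|q|<|r|$, so $q/r\in(0,1)$. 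Your identity $\frac{dG}{dm}(q)=q/r$ is the conceptually cleaner statement: since $q=-\lambda^s$ and $r=-\lambda^u$, it says the multiplier of the slope map at its stable fixed point is $\lambda^s/\lambda^u$, making the contraction of slopes transparent; the paper's computation, by contrast, is self-contained and keeps Lemma \ref{le:Gp} logically independent of Lemma \ref{le:qpr}. Your appeal to Lemma \ref{le:qpr} is legitimate (it precedes this lemma and has identical hypotheses) but heavier than needed: $|q|<|r|$ follows at once from \eqref{eq:q}--\eqref{eq:r}, because $|\tau|>\delta+1$ gives $\tau^2>(\delta+1)^2\ge 4\delta$, so $\sqrt{1-4\delta/\tau^2}>0$ and $|q|=\tfrac{|\tau|}{2}\left(1-\sqrt{1-4\delta/\tau^2}\right)<\tfrac{|\tau|}{2}\left(1+\sqrt{1-4\delta/\tau^2}\right)=|r|$, with no detour through $p$. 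One pedantic point you handled implicitly: the division by $q$ in $\tau+q=-\delta/q$ requires $q\neq 0$, which is guaranteed since $qr=\delta>0$.
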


\begin{proof}
We have
\begin{equation}
\frac{d G}{d m} = \frac{\delta}{(\tau + m)^2} \,,
\label{eq:Gp}
\end{equation}
which is evidently positive for all $m \ne -\tau$.
The function $q(\tau,\delta)$ is a root of $m^2 + \tau m + \delta = 0$,
thus to evaluate $\frac{d G}{d m} \left( q(\tau,\delta) \right)$ we can replace
one of the $(\tau + m)$'s in the denominator of \eqref{eq:Gp} with $-\frac{\delta}{m}$ to obtain
\begin{equation}
\frac{d G}{d m} \left( q(\tau,\delta) \right) = \frac{-m}{\tau + m} \,,
\nonumber
\end{equation}
where $m = q(\tau,\delta)$, and so
\begin{equation}
\frac{d G}{d m} \left( q(\tau,\delta) \right) = \frac{-1}{\frac{\tau}{q(\tau,\delta)} + 1} \,.
\nonumber
\end{equation}
Notice $\frac{q(\tau,\delta)}{\tau} = -\frac{1}{2} + \sqrt{1 - \frac{4 \delta}{\tau^2}} > -\frac{1}{2}$.
Thus $\frac{\tau}{q(\tau,\delta)} + 1 < -1$, hence $\frac{d G}{d m} (q(\tau,\delta)) < 1$, as required.
\end{proof}

\begin{lemma}
Suppose $\delta > 0$ and $|\tau| > \delta + 1$.
If $m \in \mathbb{R}$ is such that $\tau m > \tau p(\tau,\delta)$,
then $\left\| A v \right\| > \| v \|$, where $v = \begin{bmatrix} 1 \\ m \end{bmatrix}$.
\label{le:expanding}
\end{lemma}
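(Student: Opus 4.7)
The plan is a direct algebraic verification: compute $\|Av\|^2 - \|v\|^2$ and rewrite it so that $p(\tau,\delta)$ appears explicitly.

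First I would use the formulas $\|v\|^2 = 1+m^2$ and $\|Av\|^2 = (\tau+m)^2+\delta^2$ given earlier in the section. Expanding,
\begin{equation}
\|Av\|^2 - \|v\|^2 = (\tau+m)^2 + \delta^2 - (1+m^2) = \tau^2 + 2\tau m + \delta^2 - 1.
\nonumber
\end{equation}
Next I would observe that the standing hypothesis $|\tau|>\delta+1$ guarantees $\tau\neq 0$, so $p(\tau,\delta) = -\frac{\tau^2+\delta^2-1}{2\tau}$ is well defined and $\tau^2+\delta^2-1 = -2\tau p(\tau,\delta)$. Substituting,
\begin{equation}
\|Av\|^2 - \|v\|^2 = 2\tau m - 2\tau p(\tau,\delta) = 2\bigl(\tau m - \tau p(\tau,\delta)\bigr).
\nonumber
\end{equation}
The hypothesis $\tau m > \tau p(\tau,\delta)$ then gives $\|Av\|^2 > \|v\|^2$, and taking positive square roots yields $\|Av\| > \|v\|$, as required.

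There is really no obstacle here; the lemma is essentially the statement that $m=p(\tau,\delta)$ is precisely the threshold where $\|Av\|=\|v\|$, which was how $p$ was defined in the first place. The only small point worth mentioning is that the condition is stated as $\tau m > \tau p$ rather than $m > p$ because $p$ lies on opposite sides of $0$ for the two matrices $A_L$ (with $\tau_L>0$) and $A_R$ (with $\tau_R<0$); multiplying through by $\tau$ handles both signs uniformly and matches the way the lemma will later be applied to produce expansion on the cone $\Psi_K$.
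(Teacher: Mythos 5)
Your proof is correct and follows essentially the same route as the paper: both compute $\|Av\|^2 - \|v\|^2 = \tau^2 + \delta^2 - 1 + 2\tau m$ and use the defining identity $\tau^2+\delta^2-1 = -2\tau p(\tau,\delta)$ to conclude from $\tau m > \tau p(\tau,\delta)$. Your closing remark about the sign of $\tau$ explaining the formulation $\tau m > \tau p$ is a nice observation, consistent with how the paper applies the lemma to $A_L$ and $A_R$ in the proof of Proposition \ref{pr:iec}.
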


\begin{proof}
We have
\begin{align*}
\| A v \|^2 - \| v \|^2
&= (\tau + m)^2 + \delta^2 - (1 + m^2) \\
&= \tau^2 + \delta^2 - 1 + 2 \tau m \\
&> \tau^2 + \delta^2 - 1 + 2 \tau p(\tau,\delta).
\end{align*}
The last expression is zero by \eqref{eq:p},
thus $\| A v \| > \| v \|$, as required.
\end{proof}

\begin{lemma}
Suppose $\delta > 0$, $|\tau| > \delta + 1$, and $|\tau| > \frac{\delta + 2}{\sqrt{2}}$.
If $m \in \mathbb{R}$ is such that $|m - \tau| \le |q(\tau,\delta) - \tau|$,
then $\left\| A v \right\| > \sqrt{2} \,\| v \|$, where $v = \begin{bmatrix} 1 \\ m \end{bmatrix}$.
\label{le:sqrt2}
\end{lemma}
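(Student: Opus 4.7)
The plan is to prove this by a direct energy-type computation, combined with a Vieta-based simplification to link the bound back to the eigenvalues of $A$. To begin, writing $v = \begin{bmatrix} 1 \\ m \end{bmatrix}$ and expanding as in the proof of Lemma \ref{le:expanding}, one obtains the identity
\begin{equation*}
\|Av\|^2 - 2\|v\|^2 = (\tau + m)^2 + \delta^2 - 2(1 + m^2) = 2\tau^2 + \delta^2 - 2 - (m - \tau)^2.
\end{equation*}
The hypothesis $|m - \tau| \le |q(\tau,\delta) - \tau|$ then immediately yields the bound
\begin{equation*}
\|Av\|^2 - 2\|v\|^2 \ge 2\tau^2 + \delta^2 - 2 - \bigl(q(\tau,\delta) - \tau\bigr)^2,
\end{equation*}
so the whole problem reduces to showing this last quantity is strictly positive.

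To handle the right-hand side I will exploit that $q = q(\tau,\delta)$ is, by definition, a root of $m^2 + \tau m + \delta = 0$, and let $r = r(\tau,\delta)$ denote the other root, so that Vieta's formulas give $q + r = -\tau$ and $qr = \delta$. Substituting $\tau = -(q+r)$ to eliminate $\tau$ from $2\tau^2 + \delta^2 - 2 - (q - \tau)^2$ collapses the expression to
\begin{equation*}
r^2 - 2q^2 + \delta^2 - 2,
\end{equation*}
and then eliminating $q$ via $q = \delta/r$ and clearing denominators produces the clean factorisation
\begin{equation*}
\frac{(r^2 - 2)(r^2 + \delta^2)}{r^2}.
\end{equation*}
Since the second factor is plainly positive, the sign of this quantity is determined entirely by $r^2 - 2$.

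The final step is to confirm that the hypothesis $|\tau| > (\delta+2)/\sqrt{2}$ forces $r^2 > 2$. Because $r$ is the slope of the eigenvector corresponding to the dominant eigenvalue $\lambda^u$ of $A$, the paper's remark $r = -\lambda^u$ means $r^2 = (\lambda^u)^2$, so one only needs $|\lambda^u| > \sqrt{2}$. Writing $\lambda^u = \tfrac{1}{2}\bigl(\tau + \operatorname{sgn}(\tau)\sqrt{\tau^2 - 4\delta}\bigr)$ and squaring the inequality $|\lambda^u| > \sqrt{2}$ (handling the sign of $\tau$) reduces directly to $|\tau|\sqrt{\tau^2 - 4\delta} > 2\sqrt{2}\,|\tau| - \tau^2 - 2$, which after a further squaring becomes $|\tau| > (\delta+2)/\sqrt{2}$; this equivalence is already hinted at in \eqref{eq:paramCond2_alternate}, so citing that suffices. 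Combining these, $\|Av\|^2 > 2\|v\|^2$, giving the lemma.

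The main obstacle is spotting the clean factorisation $(r^2 - 2)(r^2 + \delta^2)/r^2$: the raw expression $2\tau^2 + \delta^2 - 2 - (q - \tau)^2$ is unenlightening until one thinks to introduce the companion root $r$ and to pass to the eigenvalue-variable via $q = \delta/r$. Everything else in the argument is routine once this identity is in hand.
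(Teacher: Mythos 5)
Your proof is correct, and it takes a genuinely different route at the decisive step. Both arguments begin identically: setting $H(m)=\|Av\|^2-2\|v\|^2$, your vertex form $H(m)=2\tau^2+\delta^2-2-(m-\tau)^2$ is exactly the paper's \eqref{eq:H} with the square completed, so the hypothesis $|m-\tau|\le|q-\tau|$ reduces the lemma to $H\left(q(\tau,\delta)\right)>0$ in both cases (the paper phrases this as concavity of the parabola with maximum at $m=\tau$). From there the paper substitutes \eqref{eq:q} to get the explicit expression \eqref{eq:Hq}, shows it is increasing in $\tau^2$ by differentiation, and evaluates at the boundary $|\tau|=(\delta+2)/\sqrt{2}$, where it is nonnegative. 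You instead eliminate $\tau$ and $\delta$ via Vieta's formulas $q+r=-\tau$, $qr=\delta$ to obtain the exact factorisation $H(q)=(r^2-2)(r^2+\delta^2)/r^2$ — which I have checked — and then use $r=-\lambda^u$ to reduce everything to $(\lambda^u)^2>2$, i.e.\ precisely the eigenvalue reformulation \eqref{eq:paramCond2_alternate} of the hypothesis $|\tau|>(\delta+2)/\sqrt{2}$. Your route buys an exact equivalence ($H(q)>0$ if and only if $|\lambda^u|>\sqrt{2}$) and explains conceptually why $\sqrt{2}$ is the natural constant in the lemma, whereas the paper's monotonicity-plus-boundary-evaluation argument is more computational but entirely self-contained in $(\tau,\delta)$. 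Two small blemishes, neither fatal: the intermediate inequality you display, $|\tau|\sqrt{\tau^2-4\delta}>2\sqrt{2}\,|\tau|-\tau^2-2$, is garbled — the correct chain is $\sqrt{\tau^2-4\delta}>2\sqrt{2}-|\tau|$, which squares to $|\tau|>(\delta+2)/\sqrt{2}$ when $|\tau|<2\sqrt{2}$ and holds trivially when $|\tau|\ge 2\sqrt{2}$ — and that squaring step requires this case split, which you gloss over. Both points become moot if, as you suggest, you simply invoke the equivalence of \eqref{eq:paramCond2} and \eqref{eq:paramCond2_alternate} already asserted in the paper.
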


\begin{proof}
Let
\begin{equation}
H(m) = \| A v \|^2 - 2 \| v \|^2
= -m^2 + 2 \tau m + \tau^2 + \delta^2 - 2.
\label{eq:H}
\end{equation}
We only need to show $H \left( q(\tau,\delta) \right) > 0$,
because $H(m)$ is a concave down parabola that achieves its maximum value at $m = \tau$.

By substituting \eqref{eq:q} into \eqref{eq:H} we obtain
\begin{equation}
H \left( q(\tau,\delta) \right) = \delta^2 + \delta - 2
+ \frac{\tau^2}{2} \left( -1 + 3 \sqrt{1 - \frac{4 \delta}{\tau^2}} \right).
\label{eq:Hq}
\end{equation}
For any fixed $\delta > 0$, this is an increasing function of $|\tau|$ because
\begin{equation}
\frac{\partial H \left( q(\tau,\delta) \right)}{\partial \left( \tau^2 \right)} =
1 + \frac{3 \left( \sqrt{1 - \frac{4 \delta}{\tau^2}} - 1 \right)^2}{4 \sqrt{1 - \frac{4 \delta}{\tau^2}}},
\nonumber
\end{equation}
which is evidently positive.
Thus $H \left( q(\tau,\delta) \right)$ is strictly greater than its value at
$|\tau| = \frac{\delta + 2}{\sqrt{2}}$.
From \eqref{eq:Hq}, we obtain, after simplification,
\begin{align}
H \left( q \left( {\textstyle \pm \frac{\delta + 2}{\sqrt{2}}},\delta \right) \right)
= \frac{3}{4} (\delta+2) \big( \delta-2 + |\delta-2| \big) 
\ge 0. \nonumber
\end{align}
Thus $H \left( q(\tau,\delta) \right) > 0$, which completes the proof.
\end{proof}

We are now ready to prove Proposition \ref{pr:iec}.
Let
\begin{align}
G_L(m) &= \frac{-\delta_L}{\tau_L + m}, &
G_R(m) &= \frac{-\delta_R}{\tau_R + m},
\label{eq:GLGR}
\end{align}
be the `slope maps' for $A_L$ and $A_R$.
Lemma \eqref{le:Gp} has shown that these maps are increasing
and have stable fixed points $q_L$ and $q_R$, respectively.
Consequently they appear as in Fig.~\ref{fig:slopeMaps},
from which we see that $K$ is forward invariant under both $G_L$ and $G_R$
(this is proved carefully below).
That $\Psi_K$ is expanding follows from Lemmas \ref{le:qpr} and \ref{le:expanding},
and the strong expansion ($c > \sqrt{2}$) follows from Lemma \ref{le:sqrt2}.

\begin{figure}[b!]
\begin{center}
\setlength{\unitlength}{1cm}
\begin{picture}(6,6)
\put(0,0){\includegraphics[height=6cm]{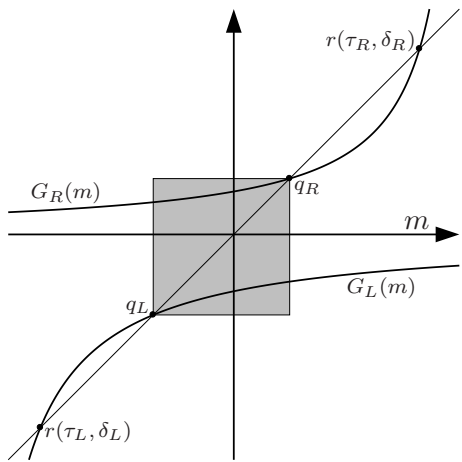}}
\put(5.26,3.07){\small $m$}
\put(4.5,2.22){\scriptsize $G_L(m)$}
\put(.3,3.48){\scriptsize $G_R(m)$}
\put(1.56,2){\scriptsize $q_L$}
\put(3.8,3.59){\scriptsize $q_R$}
\put(.48,.31){\scriptsize $r(\tau_L,\delta_L)$}
\put(4.22,5.5){\scriptsize $r(\tau_R,\delta_R)$}
\end{picture}
\caption{
The slope maps \eqref{eq:GLGR}.
$G_L(m)$ and $G_R(m)$ are the slopes of $A_L v$ and $A_R v$, respectively, 
where $v$ has slope $m$.
\label{fig:slopeMaps}
}
\end{center}
\end{figure}

\begin{proof}[Proof of Proposition \ref{pr:iec}]
We first show that $\Psi_K$ is expanding.
Choose any $v \in \Psi_K$, and let $m$ be its slope.
By linearity it suffices to consider $v = \begin{bmatrix} 1 \\ m \end{bmatrix}$.

Since $\tau_L > 0$, we have $p(\tau_L,\delta_L) < q_L$ by Lemma \ref{le:qpr}.
Thus $m > p(\tau_L,\delta_L)$, and so $\| A_L v \| > \| v \|$ by Lemma \ref{le:expanding}.
Similarly, since $\tau_R < 0$, we have $p(\tau_R,\delta_R) > q_R$.
Thus $m < p(\tau_R,\delta_R)$, and so $\| A_R v \| > \| v \|$.
Since $K$ is compact, the set
$\left\{ \frac{\| A_J v \|}{\| v \|} \,\middle|\, J \in \{ L,R \},\, v \in \Psi_K \right\}$ has a minimum,
call it $c$, and $c > 1$ as required.

Next we show that $\Psi_K$ is invariant.
To do this we show that $G_J(K) \subseteq K$, for both $J = L$ and $J = R$.
The function $G_J$ has fixed points $q_J$ and $r_J = r(\tau_J,\delta_J)$, where $r_J \notin K$ by Lemma \ref{le:qpr}.
Thus, by Lemma \ref{le:Gp}, for all $m \in K$ we have $G_L(m) \ge G_L(q_L) = q_L$, and $G_L(m) \le m \le q_R$.
Similarly, for all $m \in K$ we have $G_R(m) \ge m \ge q_L$, and $G_R(m) \le G_R(q_R) = q_R$.
This shows that $G_J(K) \subseteq K$, for both $J = L$ and $J = R$.
Thus $\Psi_K$ is an invariant expanding cone for both $A_L$ and $A_R$.

Now suppose \eqref{eq:paramCond2} is also satisfied.
By Lemma \ref{le:sqrt2} and since $K$ is compact,
to verify the strong expansion property we just need to show that for any $m \in K$ we have
\begin{align}
|m-\tau_L| &\le |q_L-\tau_L|,
\label{eq:sqrt2Proof1} \\
\text{and} \quad
|m-\tau_R| &\le |q_R-\tau_R|.
\label{eq:sqrt2Proof2}
\end{align}
Since $q_L = -\lambda_L^s$ and $q_R = -\lambda_R^s$ (as explained in the text)
we have $-1 < q_L < 0 < q_R < 1$, and so
\begin{equation}
q_R < 1 < 2 - q_L < 2 \tau_L - q_L \,.
\nonumber
\end{equation}
Thus $K \subseteq \left[ q_L, 2 \tau_L - q_L \right]$, and so \eqref{eq:sqrt2Proof1} is satisfied.
For similar reasons $K \subseteq \left[ 2 \tau_R - q_R, q_R \right]$, which implies \eqref{eq:sqrt2Proof2}.
\end{proof}

\section{Consequences of invariant expanding cones}
\label{sec:Lyapunov}
\setcounter{equation}{0}


In this section we use the existence of an invariant expanding cone
(see Proposition \ref{pr:iec}) to prove Theorem \ref{th:Lyapunov}
and show that all periodic solutions are unstable.
This includes periodic solutions with points on $x=0$ for which $\rD f$ is undefined.
The stability of such periodic solutions can be extremely complicated \cite{Si16d},
but here a lack of stability follows simply from the definition of Lyapunov stability.

\begin{proof}[Proof of Theorem \ref{th:Lyapunov}]
By Proposition \ref{le:trappingRegion}, $f$ has a trapping region $\Omega_{\rm trap}$.
Thus $f$ has a topological attractor $\Lambda \subseteq \Omega_{\rm trap}$.

By Proposition \ref{pr:iec}, there exists an invariant expanding cone $\Psi_K$,
for both $A_L$ and $A_R$, and $v = v_0 = \begin{bmatrix} 1 \\ 0 \end{bmatrix} \in \Psi_K$
(because $q_L < 0 < q_R$).
For all $i \ge 0$, let
\begin{equation}
v_{i+1} = \frac{\rD f \left( f^i(z) \right) v_i}
{\left\| \rD f \left( f^i(z) \right) v_i \right\|},
\label{eq:vi}
\end{equation}
so that
\begin{equation}
\left\| \rD f^n (z) v \right\| = \prod_{i=0}^{n-1} \left\| \rD f \left( f^i(z) \right) v_i \right\|.
\label{eq:LyapunovProof1}
\end{equation}
That the $v_i$ are well-defined is easily established inductively:
Each derivative is well-defined because $z \notin \Sigma_\infty$.
Also $v_i \in \Psi_K$ implies that the denominator in \eqref{eq:vi} is non-zero by the expansion property,
and $v_{i+1} \in \Psi_K$ by invariance.

Then \eqref{eq:LyapunovProof1} and the expansion property give
$\left\| \rD f^n (z) v \right\| \ge c^n$, for some $c > 1$, and so
\begin{equation}
\frac{1}{n} \ln \left( \left\| \rD f^n (z) v \right\| \right) \ge \ln(c),
\label{eq:LyapunovProof2}
\end{equation}
for all $n \ge 1$.
Therefore
\begin{equation}
\liminf_{n \to \infty} \frac{1}{n} \ln \left( \left\| \rD f^n (z) v \right\| \right) > 0,
\nonumber
\end{equation}
and thus $\lambda(z,v) > 0$, if the limit \eqref{eq:lyapExp} exists.
\end{proof}


\begin{proposition}
Suppose \eqref{eq:paramCond1} is satisfied.
Then all periodic solutions of $f$ are unstable.
\label{pr:perSolns}
\end{proposition}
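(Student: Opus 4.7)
My plan is to argue by contradiction, leveraging the invariant expanding cone $\Psi_K$ from Proposition \ref{pr:iec}. The key idea is that any perturbation of a periodic point in the cone direction must grow exponentially under iteration, contradicting Lyapunov stability. The feature this approach must accommodate is that $\rD f$ may fail to exist at periodic points on the switching manifold $\{x=0\}$, so I will avoid a direct eigenvalue argument in favour of tracking perturbations by hand.

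First I would note that no periodic orbit lies entirely in $\{x=0\}$: since $f$ sends $(0,y)$ to $(y+1,0)$ via either branch, keeping the next iterate in $\{x=0\}$ would force $y=-1$, but then the subsequent image $(0,0)\mapsto(1,0)$ falls off $\{x=0\}$. So I may relabel and assume $z_0 \notin \{x=0\}$. Supposing the orbit $\{z_0,\dots,z_{n-1}\}$ is Lyapunov stable, I would fix $\ee>0$ smaller than $|z_{i,1}|$ for every $i$ with $z_i \notin \{x=0\}$, obtain a corresponding $\delta>0$ from stability, and set $w = z_0 + \delta' v_0$ for $v_0 = [1,0]^{\sf T}$ and some small $\delta' \in (0,\delta)$. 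Note $v_0 \in \Psi_K$ because $q_L < 0 < q_R$.

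The core step is to show by induction on $k$ that
\[
f^k(w) - z_{k \bmod n} = M_k \delta' v_0,
\]
where $M_k$ is a length-$k$ product of matrices drawn from $\{A_L,A_R\}$ and $M_k v_0 \in \Psi_K$. If $z_{k \bmod n} \notin \{x=0\}$, the stability bound combined with the choice of $\ee$ forces $f^k(w)$ onto the same side of $\{x=0\}$ as $z_{k \bmod n}$, so a single linear branch governs both iterates and the inductive step is a clean subtraction. If $z_{k \bmod n} \in \{x=0\}$, continuity of $f$ implies $A_L z_{k \bmod n} = A_R z_{k \bmod n}$ (the second columns of $A_L$ and $A_R$ coincide), and one can still write $f^{k+1}(w) - z_{(k+1) \bmod n} = A_J (f^k(w) - z_{k \bmod n})$ using whichever branch $J$ matches the side of $f^k(w)$. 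Invariance of $\Psi_K$ under both matrices preserves $M_{k+1} v_0 \in \Psi_K$, and the expansion property yields $\|M_k v_0\| \ge c^k \|v_0\|$. Once $c^k \delta' \|v_0\|$ exceeds $\ee$, this contradicts Lyapunov stability.

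The main obstacle is precisely the case in which some $z_{k \bmod n}$ lies on the switching manifold: although $\rD f$ is undefined there, the two candidate linear branches agree on any vector along $\{x=0\}$, which is exactly what keeps the subtraction step affine and allows the cone-invariance chain to propagate through the induction without ever needing the derivative at a bad point.
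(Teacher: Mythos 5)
Your proposal is correct and takes essentially the same route as the paper's own proof: perturb the periodic point along $v_0 = [1,0]^{\sf T} \in \Psi_K$, note that so long as the difference vector stays below $\ee$ both orbits are governed by a common branch $A_J$ (the two matrices agree on $\{x=0\}$, which handles periodic points on the switching manifold), and use cone invariance plus the expansion constant $c>1$ from Proposition \ref{pr:iec} to force $\|f^k(w)-f^k(z)\| \ge c^k \delta'$, contradicting Lyapunov stability. The only cosmetic differences are your explicit proof-by-contradiction framing and your (correct but unnecessary) preliminary observation that no periodic orbit lies wholly in $\{x=0\}$, a case the paper absorbs by simply setting $\ee=1$ when no orbit point lies off the switching manifold.
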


\begin{proof}
Let $z \in \mathbb{R}^2$ be a point of a period-$n$ solution of $f$.
Let $\mathcal{I}$ be the set of all $i \in \{ 0,\ldots,n-1 \}$ for which $f^i(z)$ does not lie on $x=0$.
Let
\begin{equation}
\ee = \min_{i \in \mathcal{I}} \left| f^i(z)_1 \right|,
\nonumber
\end{equation}
and $\ee = 1$ if $\mathcal{I} = \varnothing$.

Choose any $\delta \in (0,\ee]$, and let $z_\delta = z + \begin{bmatrix} \delta \\ 0 \end{bmatrix}$.
For each $i \ge 0$, let $v_i = f^i(z_\delta) - f^i(z)$.
Notice $\| v_0 \| = \delta \le \ee$, and $v_0 \in \Psi_K$ (the cone defined in Proposition \ref{pr:iec}).

For any $i \ge 0$,
if $\| v_i \| \le \ee$ then $f^i(z_\delta)$ and $f^i(z)$ do not lie on different sides of $x=0$
and so there exists $J \in \{ L, R \}$ such that
\begin{align}
f^{i+1}(z_\delta) &= A_J f^i(z_\delta) + \begin{bmatrix} 1 \\ 0 \end{bmatrix}, &
f^{i+1}(z) &= A_J f^i(z) + \begin{bmatrix} 1 \\ 0 \end{bmatrix}.
\end{align}
Consequently $v_{i+1} = A_J v_i$.
Thus if we also have $v_i \in \Psi_K$, then $v_{i+1} \in \Psi_K$ and $\| v_{i+1} \| \ge c \| v_i \|$
(where $c > 1$).

This shows that we cannot have $\| v_i \| \le \ee$ for all $i \ge 0$
because, by induction, this would imply $\| v_i \| \ge c^i \delta$ for all $i \ge 0$.
Hence $\| v_i \| > \ee$ for some $i \ge 0$.
That is, the forward orbit of $z_\delta$ escapes an $\ee$-neighbourhood of the periodic solution.
Since we have allowed arbitrary values of $\delta > 0$, this shows that the periodic solution is not Lyapunov stable.
\end{proof}

\section{The unstable manifold $W^u(X)$}
\label{sec:invMans}
\setcounter{equation}{0}

Here we prove the first two parts of Theorem \ref{th:transitive}.
Part (i) is proved via direct calculations.
Our proof of part (ii) mimics arguments used to prove Theorem 2 of \cite{Mi80}
and requires the assumption $\delta_L, \delta_R < 1$.

\begin{lemma}
Suppose \eqref{eq:paramCond1} and \eqref{eq:paramCond2} are satisfied and $\phi > 0$.
Then $f^2(T)$ lies to the left of $E^s(X)$.
\label{le:Z}
\end{lemma}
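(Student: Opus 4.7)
The strategy is a direct computation: parametrize $T$ via the unstable eigenvector of $A_R$ and chase $f^{2}(T)$ through the two branches. Since $T\in E^{u}(X)$ with $T_{2}=0$, write $T = X + s\,(1,-\lambda_{R}^{s})^{\sf T}$ and solve for $s$; using $\tau_{R}=\lambda_{R}^{s}+\lambda_{R}^{u}$ and $\delta_{R}=\lambda_{R}^{s}\lambda_{R}^{u}$ this yields the clean expression $T_{1}=1/(1-\lambda_{R}^{s})$. The right branch of $f$ is an affine map with fixed point $X$ and linear part $A_{R}$, so it acts on $E^{u}(X)$ by scaling by $\lambda_{R}^{u}$ about $X$. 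Hence
\[
f(T) \;=\; X + \lambda_{R}^{u}\,s\,(1,-\lambda_{R}^{s})^{\sf T},
\qquad f(T)_{1} \;=\; \frac{1+\lambda_{R}^{u}}{1-\lambda_{R}^{s}}.
\]
By \eqref{eq:paramCond1} we have $\lambda_{R}^{u}<-1$ and $\lambda_{R}^{s}\in(-1,0)$, so $f(T)_{1}<0$, and therefore $f^{2}(T) = A_{L}f(T)+(1,0)^{\sf T}$, whose components I would write out explicitly in $\lambda_{L}^{u},\lambda_{L}^{s}$.

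Next I convert the geometric claim to a linear inequality. The line $E^{s}(X)$ passes through $X$ with slope $-\lambda_{R}^{u}>0$, so the condition ``$f^{2}(T)$ lies to the left of $E^{s}(X)$'' is equivalent to
\[
\bigl(f^{2}(T)_{2}-X_{2}\bigr) + \lambda_{R}^{u}\bigl(f^{2}(T)_{1}-X_{1}\bigr) \;>\; 0.
\]
Substituting the expressions obtained above and expressing everything through the four eigenvalues reduces this to a polynomial inequality in $\lambda_{L}^{u},\lambda_{L}^{s},\lambda_{R}^{u},\lambda_{R}^{s}$.

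The main obstacle is showing this polynomial inequality follows from the hypotheses. I expect the expression to factor so that the roles of the two sides separate: the bound $|\lambda_{R}^{u}|>\sqrt{2}$ supplied by \eqref{eq:paramCond2} drives $f(T)$ sufficiently deep into $x<0$, and the companion bound $\lambda_{L}^{u}>\sqrt{2}$ then guarantees that the subsequent application of $A_{L}$ carries the point past $E^{s}(X)$. Careful sign-tracking through the many cancellations is the delicate step; in the end the role of $\phi>0$ may well be limited to maintaining the running geometric setup of the section rather than entering the final inequality directly.
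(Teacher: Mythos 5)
Your reductions are all correct: indeed $T_1 = 1/(1-\lambda_R^s)$, $f(T)_1 = (1+\lambda_R^u)/(1-\lambda_R^s) < 0$ so the second step uses the left branch, and for the positively sloped line $E^s(X)$ the condition ``left of $E^s(X)$'' is exactly your sign criterion $\bigl(f^2(T)_2 - X_2\bigr) + \lambda_R^u\bigl(f^2(T)_1 - X_1\bigr) > 0$. But as written the proposal has a genuine gap: the entire content of the lemma is the final polynomial inequality, and you only \emph{expect} it to factor favourably --- that expectation is asserted, not proved. For the record, your route does close. Writing $a = \lambda_L^s$, $b = \lambda_L^u$, $c = \lambda_R^s$, $d = \lambda_R^u$ and multiplying your criterion by $(1-c)(1-d) > 0$, the expression collapses (after the cancellations you anticipated) to
\begin{equation}
\left( d^2 - 1 \right) \left( ab + |d|(a+b) \right) \;>\; d^2 \left( 1 - \delta_R \right),
\nonumber
\end{equation}
using $cd = \delta_R$. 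Since $\delta_R > 0$ the right side is less than $d^2$, and since $|d| \ge \sqrt{2}$ implies $\sqrt{2}\,(d^2-1) \ge |d|$, we get $(d^2-1)\,|d|\,b > \frac{d^2}{\sqrt{2}} \cdot \sqrt{2} = d^2$ from $b > \sqrt{2}$; the remaining terms $(d^2-1)(ab + |d|a)$ are positive, so the inequality holds. Note this uses precisely \eqref{eq:paramCond2_alternate} and $\delta_R > 0$, and your closing guess is also right: $\phi > 0$ plays no role in this lemma (nor does it in the paper's proof).

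Once filled in this way, your argument is a genuinely different route from the paper's. The paper never expands the line criterion: it uses the scaling relation $f(z) - X = \lambda_R^u (z - X)$ on $E^u(X)$ at $z = f^{-1}(T)$ and $z = T$ to obtain $|f(T)_1| = \left( |\lambda_R^u| - \frac{1}{|\lambda_R^u|} \right)(T_1 - X_1) > \frac{1}{\sqrt{2}}(T_1 - X_1)$, then chains $T_1 - f^2(T)_1 > -\tau_L f(T)_1 > \sqrt{2}\,|f(T)_1| > T_1 - X_1$ to conclude $f^2(T)_1 < X_1$, which together with $f^2(T)_2 = -\delta_L f(T)_1 > 0$ trivially places $f^2(T)$ left of the positively sloped $E^s(X)$. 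The paper's argument is cleaner because it proves the stronger statement that $f^2(T)$ lies left of $X$ itself and above $y = 0$, avoiding any factorisation; your brute-force version is less slick but more mechanical, and it makes transparent that the $\sqrt{2}$ thresholds in \eqref{eq:paramCond2} are exactly what the inequality consumes. Either way, you must actually exhibit the factorisation and sign-tracking --- as submitted, the decisive step is missing.
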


\begin{proof}
For any $z \in E^u(X)$ with $z_1 \ge 0$,
we have $f(z) - X = \lambda_R^u (z - X)$.
Using $z = f^{-1}(T)$ and just taking the first components, we obtain
\begin{equation}
T_1 - X_1 = \left| \lambda_R^u \right| X_1 \,.
\label{eq:ZProof1}
\end{equation}
With instead $z = T$ we obtain
\begin{equation}
X_1 - f(T)_1 = \left| \lambda_R^u \right| \left( T_1 - X_1 \right).
\label{eq:ZProof2}
\end{equation}
Combining these gives
\begin{equation}
\left| f(T)_1 \right| = \left( \left| \lambda_R^u \right| - \frac{1}{\left| \lambda_R^u \right|} \right)
\left( T_1 - X_1 \right).
\nonumber
\end{equation}
Then by \eqref{eq:paramCond2_alternate},
\begin{equation}
\left| f(T)_1 \right| > \left( \sqrt{2} - \frac{1}{\sqrt{2}} \right) \left( T_1 - X_1 \right)
= \frac{1}{\sqrt{2}} \left( T_1 - X_1 \right).
\label{eq:ZProof3}
\end{equation}

From \eqref{eq:f} we have
$T_1 = \tau_L f^{-1}(T)_1 + f^{-1}(T)_2 + 1 = f^{-1}(T)_2 + 1$, and
$f^2(T)_1 = \tau_L f(T)_1 + f(T)_2 + 1$.
Subtracting these gives
\begin{align*}
T_1 - f^2(T)_1 &= -\tau_L f(T)_1 + f^{-1}(T)_2 - f(T)_2 \\
&> -\tau_L f(T)_1 \\
&> \sqrt{2} \,\left| f(T)_1 \right| \\
&> T_1 - X_1 \,.
\end{align*}
Thus $f^2(T)$ lies to the left of $X$.
Also $f^2(T)$ lies in $y > 0$ (because $f(T)_2 < 0$),
so certainly $f^2(T)$ lies to the left of $E^s(X)$.
\end{proof}

\begin{lemma}
Suppose \eqref{eq:paramCond1} and \eqref{eq:paramCond2} are satisfied,
$\delta_L < 1$, $\delta_R < 1$, and $\phi > 0$.
Then $\tilde{\Delta} = {\rm cl} \left( W^u(X) \right)$.
\label{le:Lambda}
\end{lemma}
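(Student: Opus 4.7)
The plan is to prove the two set inclusions $\mathrm{cl}(W^u(X)) \subseteq \tilde\Delta$ and $\tilde\Delta \subseteq \mathrm{cl}(W^u(X))$ separately; the first is a forward-invariance argument, while the second is where the new hypothesis $\delta_L, \delta_R < 1$ is used, via area contraction.

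For $\mathrm{cl}(W^u(X)) \subseteq \tilde\Delta$ I would begin with the edge $\lineSeg{X}{T}$ of $\Delta_0$, which lies on $E^u(X) \subseteq W^u(X)$ and inside $\Delta_0 \subseteq \Delta$. Since $\Delta$ is forward invariant by construction, each $f^n(\lineSeg{X}{T})$ is contained in $\Delta$; their union is $W^u(X)$, so $W^u(X) \subseteq \Delta$. Because $X$ is fixed, $f$ maps $W^u(X)$ onto itself, whence $W^u(X) = f^m(W^u(X)) \subseteq f^m(\Delta)$ for every $m \ge 0$, and hence $W^u(X) \subseteq \tilde\Delta$. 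After verifying closedness of $\tilde\Delta$ (for instance by replacing each $f^m(\Delta)$ with its closure and using that everything lies in the bounded trapping region $\Omega_{\rm trap}$ and that tail areas vanish), taking closures gives the inclusion.

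For the reverse inclusion I would exploit that $|\det A_L| = \delta_L$ and $|\det A_R| = \delta_R$, so with $\delta = \max(\delta_L,\delta_R) < 1$ we have $\mathrm{area}(f^n(\Delta_0)) \le \delta^n\,\mathrm{area}(\Delta_0)$ and therefore
\[
\mathrm{area}(f^n(\Delta)) \le \sum_{k\ge n} \delta^k\,\mathrm{area}(\Delta_0) = \frac{\delta^n}{1-\delta}\,\mathrm{area}(\Delta_0) \to 0 \quad\text{as } n\to\infty,
\]
so $\tilde\Delta$ has Lebesgue measure zero. To upgrade this to $\tilde\Delta \subseteq \mathrm{cl}(W^u(X))$ I would mimic the argument of \cite{Mi80}: the boundary of $\Delta_0$ splits into two edges $\lineSeg{X}{T},\lineSeg{T}{Z} \subseteq W^u(X)$ and one edge $\lineSeg{X}{Z} \subseteq W^s(X)$. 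Under iteration the $W^u$-edges stay on $W^u(X)$ while the $W^s$-edge contracts at rate $|\lambda_R^s|<1$; the new edges created when $f^n(\Delta_0)$ is cut by the switching manifold $x=0$ lie on $y=0$, and their forward iterates can be absorbed into the $W^u(X)$ structure. Combining this with the unstable expansion from Proposition \ref{pr:iec}, an area-over-length estimate shows that each connected component of $f^n(\Delta_0)$ lies in an $O(\rho^n)$-neighbourhood of $W^u(X)$ for some $\rho<1$, from which $\tilde\Delta \subseteq \mathrm{cl}(W^u(X))$ follows by letting $n\to\infty$.

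The main obstacle is the geometric bookkeeping in this last step: as $n$ grows, $f^n(\Delta_0)$ is cut by $x=0$ into ever more connected components, and one must check simultaneously that every component retains a piece of $W^u(X)$ on its boundary of length bounded below -- so that the area-to-length estimate gives vanishing transverse width -- and that the newly created fold edges never allow a component to drift away from $W^u(X)$. This is precisely the delicate point of the Misiurewicz argument in \cite{Mi80}, and the paper's own remark that $\delta_L,\delta_R<1$ is believed to be unnecessary suggests the area-contraction route is convenient but not essential.
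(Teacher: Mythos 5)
Your first inclusion is essentially the paper's argument (every point of $W^u(X)$ has a backward iterate in $\lineSeg{X}{T} \subset \Delta_0$, then use invariance of $W^u(X)$), and your use of area contraction via $|\det A_L| = \delta_L$, $|\det A_R| = \delta_R$ is the right starting point for the reverse inclusion. But the second half of your plan contains a genuine gap, and it stems from a false premise. Since $\delta_L, \delta_R > 0$, the map $f$ is a homeomorphism of the plane in this parameter regime, so $f^n(\Delta_0)$ is always a single connected topological disk and $\partial f^n(A) = f^n(\partial A)$ for any region $A$. The switching manifold $x=0$ produces \emph{kinks} in the boundary, not cuts: no new boundary edges on $y=0$ ever appear (those would arise only from folding in the non-invertible case $\delta_L \delta_R \le 0$). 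Consequently the ``ever more connected components'' whose bookkeeping you identify as the main obstacle do not exist, and the step you leave open --- that each component retains a long enough piece of $W^u(X)$ on its boundary so that an area-over-length estimate controls its width --- is exactly the step your proposal fails to supply. Your remark that $\tilde\Delta$ has Lebesgue measure zero is also a red herring: measure zero alone says nothing about proximity to $W^u(X)$, and the claim that fold edges ``can be absorbed into the $W^u(X)$ structure'' is unjustified as stated.

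The paper's actual argument is much simpler precisely because of invertibility. One has $\partial \Delta_0 \subseteq \lineSeg{X}{Z} \cup W^u(X)$ with $\lineSeg{X}{Z} \subset E^s(X)$, hence $\partial \Delta \subseteq \lineSeg{Z}{f(Z)} \cup W^u(X)$ and, applying the homeomorphism $f^n$, $\partial f^n(\Delta) \subseteq \lineSeg{f^n(Z)}{f^{n+1}(Z)} \cup W^u(X)$, where the stable segment $\lineSeg{f^n(Z)}{f^{n+1}(Z)}$ shrinks to $X \in {\rm cl}(W^u(X))$. For $z \in \tilde\Delta$, the inscribed-disk estimate ${\rm dist}\left(z, \partial f^n(\Delta)\right) \le \sqrt{{\rm Area}\left(f^n(\Delta)\right)/\pi} \to 0$ (using $\delta_L, \delta_R < 1$) then gives ${\rm dist}\left(z, W^u(X)\right) \to 0$ directly, with no per-component length lower bounds and no use of the expanding cone of Proposition \ref{pr:iec} at this stage. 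So to repair your proof: replace the component/fold analysis with the observation that $f$ is invertible and track the boundary exactly; the area contraction is needed only to force points of $\tilde\Delta$ close to that boundary.
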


\begin{proof}
First we show that ${\rm cl} \left( W^u(X) \right) \subseteq \tilde{\Delta}$.
Choose any $z \in {\rm cl} \left( W^u(X) \right)$.
Then there exist $z_k \in W^u(X)$ with $z_k \to z$ as $k \to \infty$.
For each $k$, the backward orbit of $z_k$ converges to $X$.
The convergence eventually occurs on the unstable subspace $E^u(X)$
and includes points on both sides of $X$ because $\lambda_R^u < 0$.
Thus there exists $n_k \ge 0$ such that $f^{-n_k}(z_k) \in \lineSeg{X}{T} \subset \Delta_0$.
Thus $z_k \in f^{n_k}(\Delta_0)$, and so $z_k \in \Delta$.
Hence ${\rm cl} \left( W^u(X) \right) \subseteq \Delta$.
Since ${\rm cl} \left( W^u(X) \right)$ is invariant
we must also have ${\rm cl} \left( W^u(X) \right) \subseteq \tilde{\Delta}$.

Second we show that $\tilde{\Delta} \subseteq {\rm cl} \left( W^u(X) \right)$.
Choose any $z \in \tilde{\Delta}$.
Then $z \in f^n(\Delta)$ for all $n \ge 0$.
Let ${\rm Area}(\cdot)$ denote the two-dimensional Lebesgue measure
and let $\delta_{\rm max} = \max \left( \delta_L, \delta_R \right)$.
Then
\begin{equation}
{\rm Area} \left( f^n(\Delta) \right) \le \delta_{\rm max}^n {\rm Area}(\Delta),
\nonumber
\end{equation}
which converges to $0$ as $n \to \infty$ because we have assumed $\delta_L, \delta_R < 1$.
Thus the distance of $z$ to the boundary of $f^n(\Delta)$ goes to $0$ as $n \to \infty$.

The boundary of $\Delta_0$ is contained in $\lineSeg{X}{Z} \cup W^u(X)$,
so the boundary of $f^n(\Delta_0)$ is contained in $\lineSeg{X}{f^n(Z)} \cup W^u(X)$.
Thus the boundary of $\Delta$ is contained in $\lineSeg{Z}{f(Z)} \cup W^u(X)$,
so the boundary of $f^n(\Delta)$ is contained in $\lineSeg{f^n(Z)}{f^{n+1}(Z)} \cup W^u(X)$.
But $\lineSeg{f^n(Z)}{f^{n+1}(Z)}$ converges to $X$ as $n \to \infty$.
Hence the distance of $z$ to $W^u(X)$ goes to $0$ as $n \to \infty$.
Thus $z \in {\rm cl} \left( W^u(X) \right)$ which shows that
$\tilde{\Delta} \subseteq {\rm cl} \left( W^u(X) \right)$.
\end{proof}

\section{Transitivity}
\label{sec:transitivity}
\setcounter{equation}{0}

Here we provide three Lemmas that combine to complete the proof of Theorem \ref{th:transitive}.
First we use direct calculations to show that the point
$U$ lies above the point $V$, as in Fig.~\ref{fig:qqInvManifolds_c}.
This requires significant effort because the required assumption $\phi > 0$
(equivalently $C_1 > D_1$) does not relate to the points $U$ and $V$ in a simple way.

Given that $U$ lies above $V$,
it follows that, as in Fig.~\ref{fig:qqInvManifolds_c},
any line segment in $f(\Omega)$ that intersects $x=0$ and $y=0$ must also intersect $E^s(X)$.
This is the key step to establishing transitivity and is also based on the ideas in \cite{Mi80}.
The strong expansion ($c > \sqrt{2}$) of Proposition \ref{pr:iec}
is used below in the proof of Lemma \ref{le:iterateLineSeg}.

\begin{lemma}
Suppose \eqref{eq:paramCond1} and \eqref{eq:paramCond2} are satisfied and $\phi > 0$.
Then $U_2 > V_2$.
\label{le:U2minusV2}
\end{lemma}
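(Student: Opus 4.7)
The plan is to reduce the inequality $U_2 > V_2$ to a clean condition on the eigenvalues of $A_L$ and $A_R$, and then extract that condition from $\phi > 0$ together with \eqref{eq:paramCond2}. First I would compute $U_2$ and $V_2$ directly: parametrising the segment from $D = (D_1, 0)$ to $f(D) = (\tau_R D_1 + 1, -\delta_R D_1)$ and setting the first coordinate to zero gives $U_2 = -\delta_R D_1^2/((1-\tau_R)D_1 - 1)$, while intersecting the line through $X$ of slope $-\lambda_R^u$ with $x = 0$ gives $V_2 = \lambda_R^u/(1 - \lambda_R^u)$. Both quantities are negative under \eqref{eq:paramCond1}, so $U_2 > V_2$ is equivalent to $|U_2| < |V_2|$.

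Substituting $\delta_R = \lambda_R^s \lambda_R^u$, $\tau_R = \lambda_R^s + \lambda_R^u$ and $D_1 = 1/(1 - \lambda_L^s)$, clearing denominators and collecting terms, I anticipate that $|U_2| < |V_2|$ factors cleanly as $(\lambda_L^s - \lambda_R^u)(1 - \lambda_L^s + \lambda_R^s) > 0$ (the discriminant of the associated quadratic in $\lambda_L^s$ is a perfect square, which is what makes the factorisation work). The first factor is positive, so the task becomes proving $\lambda_L^s - \lambda_R^s < 1$.

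For this, set $a = \lambda_L^u$, $b = \lambda_L^s$, $c = -\lambda_R^u$, $d = -\lambda_R^s$, so that \eqref{eq:paramCond2_alternate} reads $a, c > \sqrt{2}$. Using $\delta_L = ab$, $\tau_L = a + b$, $\delta_R = cd$, $\tau_R = -c - d$, the expression \eqref{eq:phi} rearranges to $\phi = a^2(1 - b - c - d) + a(c + d - cd) + cd$. I would then argue by contrapositive: assume $b + d \ge 1$, so that $1 - b - c - d \le -c$ and hence $\phi \le -a^2 c + a(c + d - cd) + cd$. This upper bound is nonpositive iff $ac(a - 1) \ge d(a + c - ac)$. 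The case $a + c \le ac$ is immediate; otherwise, using $d \le 1$, it suffices to show $a^2 c \ge a + c$, equivalently $c \ge a/(a^2 - 1)$. Since $a \mapsto a/(a^2 - 1)$ is decreasing on $(1, \infty)$ with value $\sqrt{2}$ at $a = \sqrt{2}$, this follows from $a, c \ge \sqrt{2}$; the strict form of \eqref{eq:paramCond2} makes the inequality strict, yielding $\phi < 0$ and contradicting $\phi > 0$.

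The main obstacle is purely algebraic: verifying the neat factorisation of $|U_2| < |V_2|$, and then carrying out the two-case analysis (split by the sign of $a + c - ac$) that shows $\phi \le 0$. The reason \eqref{eq:paramCond2} appears becomes transparent only at the last step, since $a^2 c \ge a + c$ is tight precisely at $a = c = \sqrt{2}$, thereby giving a natural explanation for the $\sqrt{2}$ threshold in \eqref{eq:paramCond2_alternate}.
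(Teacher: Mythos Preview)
Your reduction to the eigenvalue inequality $1-\lambda_L^s+\lambda_R^s>0$ (equivalently $b+d<1$) matches the paper exactly: both arguments compute $U_2$ and $V_2$, substitute $D_1=1/(1-\lambda_L^s)$, and factor $U_2-V_2$ so that the only sign in question is that of $1-\lambda_L^s+\lambda_R^s$. The paper records the full factorisation
\[
U_2-V_2=\frac{-\lambda_R^u\,(1-\lambda_L^s+\lambda_R^s)(\lambda_L^s-\lambda_R^u)}{(1-\lambda_L^s)(1-\lambda_R^u)(\lambda_L^s-\lambda_R^s-\lambda_R^u)},
\]
which confirms your anticipated ``clean'' factorisation (your two factors are the only ones whose sign is not automatic).

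Where you genuinely diverge from the paper is in deriving $b+d<1$ from $\phi>0$ and \eqref{eq:paramCond2}. The paper argues geometrically: it shows $C_1<-1/\lambda_R^s$ by contradiction, manipulating the explicit formula \eqref{eq:C1} for $C_1$ in terms of $S_2$ and using $\lambda_R^u<-\sqrt{2}$ and the bound on $S_2$ coming from $\lambda_L^u>\sqrt{2}$; then $D_1<C_1<-1/\lambda_R^s$ together with $D_1=1/(1-\lambda_L^s)$ gives the inequality directly. Your route is purely algebraic: you rewrite $\phi$ as $a^2(1-b-c-d)+a(c+d-cd)+cd$, assume $b+d\ge1$, bound $\phi$ above by $-a^2c+a(c+d-cd)+cd$, and split on the sign of $a+c-ac$, using $d<1$ and the observation that $a\mapsto a/(a^2-1)$ is decreasing with value $\sqrt{2}$ at $a=\sqrt{2}$. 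Your argument is self-contained and avoids introducing $C_1$ and $S_2$; the paper's argument, by contrast, explains \emph{geometrically} why the $\sqrt{2}$ thresholds suffice, via the intermediate bound $C_1<-1/\lambda_R^s$. Both are valid, and both make the role of \eqref{eq:paramCond2} transparent at the final step.
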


\begin{proof}
Similar to $S$, see \eqref{eq:S2}, the point $V$ has $y$-component
\begin{equation}
V_2 = \frac{-\lambda_R^u}{\lambda_R^u - 1}.
\label{eq:V2}
\end{equation}
The point $U$ is defined as the intersection of $\lineSeg{D}{f(D)}$ with $x=0$.
From $f(D) = \left( \tau_R D_1 + 1, -\delta_R D_1 \right)$, we obtain
\begin{equation}
U_2 = \frac{-\lambda_R^s \lambda_R^u D_1}
{1 - \lambda_R^s - \lambda_R^u - \frac{1}{D_1}}.
\label{eq:U2}
\end{equation}
Upon substituting \eqref{eq:D1} into \eqref{eq:U2}, subtracting \eqref{eq:V2},
and carefully factorising, we obtain
\begin{equation}
U_2 - V_2 = \frac{-\lambda_R^u
\left( 1 - \lambda_L^s + \lambda_R^s \right)
\left( \lambda_L^s - \lambda_R^u \right)}
{\left( 1 - \lambda_L^s \right)
\left( 1 - \lambda_R^u \right)
\left( \lambda_L^s - \lambda_R^s - \lambda_R^u \right)}.
\label{eq:U2minusV2Proof1}
\end{equation}
Each factor in \eqref{eq:U2minusV2Proof1}
is evidently positive, except possibly the middle factor in the numerator.
Thus it remains to show that $1 - \lambda_L^s + \lambda_R^s > 0$.

To do this we first show that $C_1 < \frac{-1}{\lambda_R^s}$.
Suppose for a contradiction that $C_1 \ge \frac{-1}{\lambda_R^s}$.
By \eqref{eq:C1} we have
\begin{equation}
\frac{-S_2}{-\lambda_R^s + \lambda_R^u \left( \lambda_R^s - 1 + \frac{\lambda_R^s}{S_2} \right)}
\ge \frac{-1}{\lambda_R^s}.
\nonumber
\end{equation}
But $\lambda_R^u < -\sqrt{2}$, see \eqref{eq:paramCond2_alternate}, thus
\begin{equation}
\frac{-S_2}{-\lambda_R^s - \sqrt{2} \left( \lambda_R^s - 1 + \frac{\lambda_R^s}{S_2} \right)}
> \frac{-1}{\lambda_R^s},
\nonumber
\end{equation}
which is equivalent to
\begin{equation}
S_2 + 1 + \sqrt{2} + \frac{\sqrt{2}}{S_2} > \frac{\sqrt{2}}{\lambda_R^s}.
\nonumber
\end{equation}
But $\lambda_R^s > -1$, thus
\begin{equation}
S_2 + 1 + \sqrt{2} + \frac{\sqrt{2}}{S_2} > -\sqrt{2},
\nonumber
\end{equation}
which is equivalent to
\begin{equation}
\left( S_2 + 2 + \sqrt{2} \right) \left( S_2 + \sqrt{2} - 1 \right) > 0.
\label{eq:U2minusV2Proof11}
\end{equation}
However, $\lambda_L^u > \sqrt{2}$, see \eqref{eq:paramCond2_alternate},
thus by \eqref{eq:S2} we have $-(2 + \sqrt{2}) < S_2 < -1$,
which contradicts \eqref{eq:U2minusV2Proof11}.

Therefore $C_1 < \frac{-1}{\lambda_R^s}$.
The assumption $\phi > 0$ implies $D_1 < C_1$,
thus $D_1 < \frac{-1}{\lambda_R^s}$.
By \eqref{eq:D1}, this is equivalent to
$1 - \lambda_L^s + \lambda_R^s > 0$,
which completes the proof.
\end{proof}

\begin{lemma}
Suppose \eqref{eq:paramCond1} and \eqref{eq:paramCond2} are satisfied and $\phi > 0$.
Let $\alpha \subset \Omega$ be a line segment with slope $m \in K = [q_L,q_R]$.
Then there exists $n \ge 1$ and points $P$ on $x=0$ and $Q$ on $y=0$
such that $\lineSeg{P}{Q} \subseteq f^n(\alpha)$.
\label{le:iterateLineSeg}
\end{lemma}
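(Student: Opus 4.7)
The plan is to iterate $\alpha$ and exploit the strong expansion ($c > \sqrt{2}$) of the invariant cone $\Psi_K$ given by Proposition \ref{pr:iec}, combined with the containment $f^n(\alpha) \subseteq \Omega$ from Lemma \ref{le:forwardInvariantSet}, to force some line-segment piece of some iterate $f^n(\alpha)$ to simultaneously meet both the switching manifold $x = 0$ and its image $y = 0$. The required subsegment $\lineSeg{P}{Q}$ is then the portion of that piece between its intersections with these two lines.

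First I would establish the combinatorial structure of the iterates. Each $f^n(\alpha)$ is a piecewise-linear curve in $\Omega$ whose smooth pieces are line segments with slopes in $K \subseteq (-1,1)$, by the invariance of the cone. Kinks of $f^n(\alpha)$ arise only as images of crossings of $x = 0$, and consequently every kink lies on $y = 0$ at the iterate immediately following its creation. Since slopes in $K$ have magnitude strictly less than $1$, the horizontal extent of any piece is at least $1/\sqrt{2}$ times its length. By expansion, any piece not split at the next iteration maps to a single image piece of length at least $c$ times as great, and hence its horizontal extent grows by at least $c/\sqrt{2} > 1$ per un-split iteration.

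Next I would argue by contradiction. Suppose no piece of any iterate meets both $x = 0$ and $y = 0$; in particular, immediately after any split, each of the two resulting pieces (sharing a common kink endpoint on $y = 0$) cannot touch $x = 0$, so each is strictly confined to one of the open half-planes $\{x > 0\}$ or $\{x < 0\}$. Tracking the descendants of such a confined piece, their lengths continue to grow by at least the factor $c$ per iteration; any further split (if it occurs) again produces descendants confined to half-planes by the contradiction hypothesis, so accumulated length is not lost. Choosing a descendant that preserves this growth, its horizontal extent must eventually exceed the diameter of $\Omega$, contradicting the containment $f^n(\alpha) \subseteq \Omega$.

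The main obstacle will be making this descendant tracking quantitative: one must identify, at each split, a child that inherits a definite fraction of the parent's length so that the accumulated $c$-factors are not dissipated into ever-shorter pieces. The strong expansion $c > \sqrt{2}$, and not merely $c > 1$, is essential, because combined with the slope bound $|m| < 1$ it is exactly what guarantees that horizontal extents grow strictly under un-split iteration. Lemma \ref{le:U2minusV2}, which constrains the geometry of $\Omega$ along $x = 0$, is likely to enter when turning the length growth into a usable horizontal-extent bound that is incompatible with boundedness of $\Omega$.
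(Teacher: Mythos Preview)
Your strategy is the paper's: use forward invariance of $\Omega$ and the strong expansion $c>\sqrt{2}$ on $\Psi_K$ to force a single segment in the orbit of $\alpha$ to grow until it must touch both $x=0$ and $y=0$. What you flag as ``the main obstacle'' is the one genuine gap, and the paper closes it with a two-step scheme that you have essentially already set up but not exploited. Put $\alpha_0=\alpha$ and, so long as $\alpha_i$ and $f(\alpha_i)$ do not \emph{both} meet $x=0$, the set $f^2(\alpha_i)$ consists of at most two line segments (your own observation---that each post-split piece has an endpoint on $y=0$ and hence, under the non-termination hypothesis, cannot also meet $x=0$---is exactly why). Since the total length of $f^2(\alpha_i)$ is at least $c^2 a_i$, taking $\alpha_{i+1}$ to be the longer piece gives $a_{i+1}\ge (c^2/2)\,a_i$ with $c^2/2>1$; unbounded growth of a single segment inside the bounded set $\Omega$ is the contradiction. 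When the construction halts, $f(\alpha_k)$ has a kink on $y=0$ and also meets $x=0$, so one of its two pieces already furnishes $\lineSeg{P}{Q}$.

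Two of your ingredients are unnecessary and should be dropped. Converting length to horizontal extent via $|m|<1$ is superfluous: a single line segment in $\Omega$ already has length bounded by the diameter of $\Omega$, so length growth alone yields the contradiction. And Lemma~\ref{le:U2minusV2} plays no role in this lemma; it enters only in Lemma~\ref{le:transitive}, where it is used to show that the segment $\lineSeg{P}{Q}$ produced here must cross $E^s(X)$.
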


\begin{proof}
Let $\alpha_0 = \alpha$.
We iteratively construct a sequence of line segments $\{ \alpha_i \}$ in $\Omega$ with slopes in $K$ and lengths $a_i$, as follows.
For each $i \ge 0$ suppose $\alpha_i$ and $f(\alpha_i)$ do not both intersect $x=0$.
Then $f^2(\alpha_i)$ is a union of at most two line segments
(and belongs to $\Omega$ because $\Omega$ is forward invariant, Lemma \ref{le:forwardInvariantSet}).
The line segments comprising $f^2(\alpha_i)$ have slopes in $K$ because $\Psi_K$ is invariant (see Proposition \ref{pr:iec}).
Also $\Psi_K$ is expanding with some $c > \sqrt{2}$,
thus the length of $f^2(\alpha_i)$ is at least $c^2 a_i$.
Thus $f^2(\alpha_i)$ contains a line segment, $\alpha_{i+1}$, with $a_{i+1} \ge \frac{c^2 a_i}{2}$.

This gives $a_n \ge \frac{c^{2 n} a_0}{2} \to \infty$ as $n \to \infty$ because $c^2 > 2$.
But $\Omega$ is bounded, so this is not possible.
Thus there exists $k \ge 0$ such that $\alpha_k$ and $f(\alpha_k)$ both intersect $x=0$.
Notice $f(\alpha_k)$ is a union of at most two line segments, both of which intersect $y=0$.
Thus there exists a line segment $\lineSeg{P}{Q} \subseteq f(\alpha_k) \subseteq f^{2 k + 1}(\alpha)$
with $P$ on $x=0$ and $Q$ on $y=0$.
\end{proof}

\begin{lemma}
Suppose \eqref{eq:paramCond1} and \eqref{eq:paramCond2} are satisfied and $\phi > 0$.
For any open $M, N \subseteq \mathbb{R}^2$ that have non-empty intersections with ${\rm cl} \left( W^u(X) \right)$,
there exists $n \ge 0$ such that $f^n(M) \cap N \ne \varnothing$.
\label{le:transitive}
\end{lemma}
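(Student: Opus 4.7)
I plan to follow the strategy of Misiurewicz \cite{Mi80}: convert a piece of $M$ into a line segment whose forward iterates $C^1$-approximate arbitrarily long arcs of $W^u(X)$, then hit $N$ using density of $W^u(X)$ in $\mathrm{cl}(W^u(X))$.

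Since $M$ is open and $M \cap \mathrm{cl}(W^u(X)) \ne \varnothing$ with $\mathrm{cl}(W^u(X)) = \tilde{\Delta} \subseteq \Omega$, I can choose a point $p \in M \cap \mathrm{int}(\Omega)$ (replacing $p$ by a forward iterate via Lemma \ref{le:forwardInvariantSet} if $p \in \partial\Omega$) and a short horizontal line segment $\alpha \subseteq M \cap \Omega$ of slope $0 \in K = [q_L, q_R]$. Applying Lemma \ref{le:iterateLineSeg} yields $n_1 \ge 1$ and points $P \in \{x=0\}$, $Q \in \{y=0\}$ with $\lineSeg{P}{Q} \subseteq f^{n_1}(\alpha) \subseteq f^{n_1}(M)$.

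Next I would argue that $\lineSeg{P}{Q}$ crosses $E^s(X)$ transversally. The segment $\lineSeg{P}{Q}$ has slope in $K \subseteq (-1,1)$, whereas $E^s(X)$ has slope $-\lambda_R^u > \sqrt{2}$ by \eqref{eq:paramCond2_alternate}, so as lines the two certainly meet; the key geometric input is that $U_2 > V_2$ (Lemma \ref{le:U2minusV2}) pins this intersection inside $\lineSeg{P}{Q}$, producing a transverse crossing point $R \in W^s(X)$.

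Now $f^n(R) \to X$ while staying in $x > 0$, so $f^n$ is affine ($A_R^n$ up to translation) on a neighborhood of the orbit $\{f^n(R)\}$. For a short subarc $\gamma_0 \subseteq \lineSeg{P}{Q}$ around $R$, Proposition \ref{pr:iec} forces the slope of $f^n(\gamma_0)$ to converge to the unstable-direction slope $q_R$ and its length to grow like $|\lambda_R^u|^n$ until it exits the affine region; by the standard $\lambda$-lemma mechanism, for $n$ large enough $f^n(\gamma_0)$ contains a segment that is $C^1$-close to any prescribed initial arc of $W^u(X)$ emanating from $X$. Since $N$ is open and meets $\mathrm{cl}(W^u(X))$, it contains some $q' = f^k(p_0) \in W^u(X)$ with $p_0$ on the initial arc $\lineSeg{X}{T}$; picking $n$ so that $f^n(\gamma_0)$ passes within a small $\ee$ of $p_0$ gives $f^{n+k+n_1}(M) \cap N \ne \varnothing$. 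The main obstacle will be the transverse-crossing step: converting the scalar inequality $U_2 > V_2$ into a guarantee that \emph{this particular} segment $\lineSeg{P}{Q}$ actually crosses $E^s(X)$ inside $\Omega$, rather than missing it through some other side of $f(\Omega)$; the $\lambda$-lemma step is then routine because the dynamics near $X$ are exactly linear.
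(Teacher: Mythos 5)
Your first two steps coincide with the paper's own proof: a segment $\alpha \subseteq M \cap \Omega$ with slope in $K$, Lemma \ref{le:iterateLineSeg} to produce $\lineSeg{P}{Q} \subseteq f^{n_1}(\alpha)$ with $P$ on $x=0$ and $Q$ on $y=0$, and Lemma \ref{le:U2minusV2} to force a transverse crossing of $E^s(X)$. The step you flag as the main obstacle is closed in the paper by a short observation you should adopt: since $n_1 \ge 1$ and $\Omega$ is forward invariant, $\lineSeg{P}{Q} \subseteq f(\Omega)$, so the endpoint $P$ lies on or above $U$ while $Q$ lies on or to the right of $f(U)$ (these being the extreme points of $f(\Omega)$ on the two axes, see Fig.~\ref{fig:qqInvManifolds_c}); since $U_2 > V_2$ and $f(V)_1 < f(U)_1$, and $E^s(X)$ meets $x=0$ at $V$ and $y=0$ at $f(V)$, the two endpoints lie strictly on opposite sides of $E^s(X)$, so the crossing is pinned inside $\lineSeg{P}{Q}$ exactly as you hoped. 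This is not a failure of your approach, only a missing two-line argument.

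Where you genuinely diverge is the final step, and here the comparison is instructive. The paper runs the homoclinic mechanism \emph{backwards}: it picks $z \in N \cap W^u(X)$, pulls $N$ back under $f^{-1}$ (which is affine in $x > 0$ with saddle fixed point $X$), and shows the pullbacks accumulate on $E^s(X)$ and stretch across $\Omega$, hence meet $\lineSeg{P}{Q}$. Crucially this uses only that the crossing point is interior to the region swept by the pullbacks --- the forward orbit of the crossing point is never needed. Your forward $\lambda$-lemma variant is also workable and elementary (the right-hand dynamics are exactly affine), but it carries an extra obligation the backward version sidesteps: you need the crossing point $R$ to lie on the genuine local stable manifold, i.e.\ on the segment $\lineSeg{V}{f^{-1}(V)}$ of $E^s(X)$ whose forward orbit stays in $x \ge 0$ and converges to $X$; a point of $E^s(X)$ beyond $f^{-1}(V)$ need not converge to $X$, so you must bound the position of $R$ along $E^s(X)$. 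You would also need the usual inclination-lemma bookkeeping for subarcs of $f^n(\gamma_0)$ that exit $x > 0$ before reaching the prescribed arc of $W^u(X)$. Both are doable, but they are precisely the costs the paper's pullback of $N$ avoids; with the endpoint-positioning argument supplied and either the pullback adopted or the local-stable-manifold check added, your proof goes through.
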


\begin{proof}
Let $\alpha \subseteq M \cap \Omega$ be a line segment with slope in $K = [q_L,q_R]$.
By Lemma \ref{le:iterateLineSeg}, there exists $n_1 \ge 1$ such that
$f^{n_1}(\alpha)$ contains a line segment $\lineSeg{P}{Q}$ with $P$ on $x=0$ and $Q$ on $y=0$.
Notice $\lineSeg{P}{Q} \subseteq f(\Omega)$ because $n_1 \ge 1$ and $f(\Omega)$ is forward invariant.
Thus $P$ lies on or above $U$, see Fig.~\ref{fig:qqInvManifolds_c}.
Since $V_2 < U_2$ (see Lemma \ref{le:U2minusV2}), $P$ lies above $E^s(X)$.
Also, $Q$ lies on or to the right of $f(U)$.
Since $f(V)_1 < f(U)_1$, $Q$ lies below $E^s(X)$.
Thus $\lineSeg{P}{Q}$ intersects $E^s(X)$ transversally.

Let $z \in N \cap W^u(X)$.
Since $f^{-n}(z) \to X$ as $n \to \infty$,
there exists $n_2 \ge 0$ such that $f^{-n}(z)$ lies in $x > 0$ for all $n \ge n_2$.
Then there exists open $N_0 \subseteq N$, with $z \in N_0$, such that $f^{-n_2}(N_0)$ lies in $x > 0$.
Iteratively define $N_k \subseteq N_{k-1}$ as the maximal open set for which
$f^{-(n_2 + k)}(N_k)$ lies in $x > 0$.
Since $f^{-1}$ is affine in $x > 0$ with saddle-type fixed point $X$,
as $k \to \infty$ the sets $f^{-(n_2 + k)}(N_k)$ approach $E^s(X)$
and stretch across $\Omega$ for sufficiently large values of $k$.
Thus there exists $n_3 \ge 0$ such that $f^{-(n_2+n_3)}(N_k)$ intersects $\lineSeg{P}{Q}$.
Thus there exists $w \in M$ such that $f^{n_1}(w) \in f^{-(n_2+n_3)}(N_{n_3})$.
Thus $f^{n_1 + n_2 + n_3}(w) \in N$,
and so $f^{n_1 + n_2 + n_3}(M) \cap N \ne \varnothing$ as required.
(This also completes the proof of Theorem \ref{th:transitive}.)
\end{proof}

\section{Discussion}
\label{sec:conc}
\setcounter{equation}{0}

We have used invariant expanding cones to
prove that, throughout the parameter region $\cR$ of \cite{BaYo98},
no invariant set of \eqref{eq:f} can have only negative Lyapunov exponents, Theorem \ref{th:Lyapunov}.
In fact we have actually proved that for any $n \ge 1$
the average expansion after $n$ iterations is at least $\ln(c)$ for some $c > 1$,
see \eqref{eq:LyapunovProof2}.
Thus $\ln(c)$ may be used as a lower bound on the maximal Lyapunov exponent,
assuming the Lyapunov exponents are well-defined.
One could also identify an invariant expanding cone for $f^{-1}$,
as done in \cite{Mi80} for the Lozi map, to obtain an upper bound on the minimal Lyapunov exponent.

Subject to additional constraints on the parameter values,
we have shown that \eqref{eq:f} is transitive on ${\rm cl}(W^u(X))$, Theorem \ref{th:transitive}.
We have also identified a forward invariant set $\Delta \subseteq \Omega_{\rm trap}$
with the property that $\bigcap_{n=0}^\infty f^n(\Delta) = {\rm cl}(W^u(X))$.
We have not proved that there do not exist other attractors in $\Omega_{\rm trap}$;
certainly there may be other invariant sets as in Fig.~\ref{fig:cantorSet}.

It remains to extend Theorems \ref{th:Lyapunov} and \ref{th:transitive}
to larger regions of parameter space.
For instance we believe the constraint in Theorem \ref{th:transitive}
that both pieces of $f$ are area-contracting is unnecessary.
It also remains to extend the ergodic theory results of \cite{CoLe84} for the Lozi map
to the more general border-collision normal form, and extend results to higher dimensions.

Finally we discuss consequences for border-collision bifurcations.
The border-collision normal form contains the leading order terms of a piecewise-smooth map
in the neighbourhood of a border-collision bifurcation.
Assuming the bifurcation occurs when a parameter $\mu$ is zero,
and with $\mu > 0$ a scaling has been done such that the constant term $[\mu,0]^{\sf T}$
is transformed to $[1,0]^{\sf T}$, then the nonlinear terms that have been neglected to produce \eqref{eq:f}
are order $\mu$ (assuming the map is piecewise-$C^2$).
In this way the effect of the nonlinear terms increases
as the value of $\mu$ increases to move away from the border-collision bifurcation at $\mu = 0$.
We believe that the features we have used to construct robust chaos
are also robust to these nonlinear terms.
This is because small nonlinear terms will not destroy
transverse intersections of invariant manifolds,
the existence of trapping region,
or the existence of an invariant expanding cone.

\section*{Acknowledgements}

The authors were supported by Marsden Fund contract MAU1809,
managed by Royal Society Te Ap\={a}rangi.


\end{document}